\numberwithin{equation}{section}
\theoremstyle{plain}
\newtheorem{Thm}{Theorem}[section]
\newtheorem{Lem}[Thm]{Lemma}
\newtheorem{Prop}[Thm]{Proposition}
\newtheorem{Cor}[Thm]{Corollary}
\theoremstyle{definition}
\tikzstyle{vertex}=[circle, draw, inner sep=0pt, minimum size=6pt] 
\newcommand{\vertex}{\node[vertex]}
\title{On $(1,2)$-step competition graphs of multipartite tournaments}
\author[]{Myungho Choi
\thanks{Corresponding author\\ 
E-mail addresses: nums8080@snu.ac.kr(M.Choi), srkim@snu.ac.kr(S.-R.Kim)}}
\author[]{Suh-Ryung Kim
%
}
\affil[]{Department of Mathematics Education,
Seoul National University, Seoul 08826, Republic of Korea}
 \newcounter{statement}
\newcommand{\statement}[2]{%
 \begin{equation}\refstepcounter{statement}\tag{S\thestatement}\label{#1}%
  \parbox{\dimexpr\linewidth-4em}{#2}%
 \end{equation}%
}
\begin{document}
\maketitle
\begin{abstract}
 A multipartite tournament is an orientation of a complete $k$-partite graph for some positive integer $k\geq 3$.
We say that a multipartite tournament $D$ is tight if every partite set forms a clique in the $(1,2)$-step competition graph, denoted by $C_{1,2}(D)$, of $D$.
In this paper, we completely characterize $C_{1,2}(D)$ for a tight multipartite tournament $D$.
We will study $C_{1,2}(D)$ for a multipartite tournament $D$ that is not tight in a follow up paper.
\end{abstract}
\noindent
{\it Keywords.} multipartite tournament; orientation of a complete multipartite graph; complete graph; stable set; $(1,2)$-step competition graph. 

\noindent
{{{\it 2010 Mathematics Subject Classification.} 05C20, 05C75}}

\section{Introduction}
In this paper, all the graphs and digraphs are assumed to be finite and simple. (For all undefined
graph theory terminologies, see \cite{bondy}.)
Given a digraph $D$ and a vertex $v$ of $D$,
we define $N^+(v)=\{u \in V(D) \mid (v,u) \in A(D)\}$, $N^-(v)=\{u \in V(D) \mid (u,v) \in A(D) \}$, $d^+(v)=|N^+(v)|$, and $d^-(v)=|N^-(v)|$.
We use the expression $u \to v$ when $(u,v) \in A(D)$. 
When representing negation, add a slash ($/$) to the symbol.

For vertices $x$ and $y$ in a digraph $D$, $d_D(x,y)$ denotes the number of arcs in a shortest directed path from $x$ to $y$ in $D$ if it exists.
For positive integers $i$ and $j$, the {\it $(i,j)$-step competition graph } of a digraph $D$, denoted by $C_{i,j}(D)$, is a graph on $V(D)$ where $uv \in E(C_{i,j}(D))$ if and only if there exists a vertex $w$ distinct from $u$ and $v$ such that (a) $d_{D-v}(u,w) \leq i$ and $d_{D-u}(v,w) \leq j$ or (b) $d_{D-u}(v,w) \leq i$ and $d_{D-v}(u,w) \leq j$.
 If two vertices of a digraph $D$ are adjacent in $C_{1,2}(D)$, then we just say that they are adjacent in the rest of this paper.
 
The $(1,1)$-step competition graph of a digraph $D$ is the {\it competition graph} of $D$.
Given a digraph $D$, the competition graph of $D$, denoted by $C(D)$, is the graph having the vertex set $V(D)$ and the edge set $\{uv \mid u \to w, v\to w \text{ for some } w \in V(D) \}$.
Cohen~\cite{cohen1968interval} introduced the notion of competition graph while studying
predator–prey concepts in ecological food webs. Cohen’s empirical observation that real-world competition graphs are
usually interval graphs had led to a great deal of research on the structure of competition graphs and on the relation between
the structure of digraphs and their corresponding competition graphs. In the same vein, various variants of competition graph
have been introduced and studied, one of which is the notion of $(i, j)$-step competition introduced by Factor and Merz~\cite{factor20111}. For recent work on this topic, see
\cite{factor20111,kamibeppu2012sufficient,
kim2015generalization,kuhl2013transversals,li2012competition,
mckay2014competition,zhang20161,zhang2013note}.
Choi \emph{et al.}~\cite{choi20171} studied the $(1,2)$-step competition graph of an orientation of a complete bipartite graph.
In this paper, we study the $(1, 2)$-step competition graph of an orientation of a complete $k$-partite graph for some integer $k \geq 3$.

For a digraph $D$, we say that vertices $u$ and $v$ in $D$ $(1,2)$-{\it compete} provided there exists a vertex $w$ distinct from $u$, $v$ that satisfies one of the following:
\begin{itemize}
\item{} there exist an arc $(u,w)$ and a directed $(v,w)$-path of length $2$ not traversing $u$;
\item{} there exist a directed $(u,w)$-path of length $2$ not traversing $v$ and an arc $(v,w)$.
\end{itemize}
We call $w$ in the above definition a {\it $(1,2)$-step common out-neighbor }of $u$ and $v$.
 It is said that two vertices {\it compete} in $D$ if they have a common out-neighbor in $D$. 
 If $u$ and $v$ compete or $(1,2)$-compete in $D$,
 then we say that $u$ and $v$
 $\{1,2\}$-{\it compete} in $D$. 
 We also say that a set is {\it $\{1,2\}$-competing} (resp.\ anti-$\{1,2\}$-competing) if any pair of vertices in the set $\{1,2\}$-competes (resp.\ if no two vertices in the set $\{1,2\}$-compete) in $D$.

We note that a vertex set $S$ of a digraph $D$ is a $\{1,2\}$-competing set (resp.\ an anti-$\{1,2\}$-competing set) if and only if $S$ is a clique (resp.\ a stable set) in $C_{1,2}(D)$.
Here, a \emph{stable set} of a graph is a set of vertices no two of which are adjacent.
 
 Given vertex sets $X$ and $Y$ of a graph $G$,
 we use the symbol $X \sim Y$ if for each $x\in X$ and $y\in Y-\{x\}$, $x$ and $y$ are adjacent in $G$.
When $X$ or $Y$ is a singleton, the set brackets are not used. 
For example, we write $ u \sim Y$ if $X=\{u\}$. 

 If $u$ and $v$ are adjacent in $C_{1,2}(D)$ by competing (resp.\ $(1,2)$-competing), then it is simply represented as $u \sim_1 v$ (resp.\ $u \sim_{1,2} v$).
 We note that $u \sim v$ if and only if $u\sim_1 v$ or $u\sim_{1,2}v$.

We call an orientation of a complete $k$-partite graph for some positive integer $k$ a \emph{$k$-partite tournament}. 
In this paper, a \emph{multipartite tournament} refers to a $k$-partite tournament with $k \ge 3$.
A tournament of order $n$ may be regarded as an $n$-partite tournament for some positive integer $n$.

In this paper, we study the $(1,2)$-step competition graph of a multipartite tournament $D$ and we completely characterize $C_{1,2}(D)$ when 
 each partite set of $D$ is $\{1,2\}$-competing
according to the size of a maximum anti-$\{1,2\}$-competing set (Theorems~\ref{thm:cha-complete}, \ref{thm:two-stable-set-intersect}, and \ref{Thm:maximum-I=3}). 
 For simplicity, we call a multipartite tournament with every partite set $\{1,2\}$-competing a {\it tight} multipartite tournament.

\section{Preliminaries}\label{property}
Let $D$ be a digraph.
We call a vertex with outdegree $0$ in $D$ a {\em sink}.
It is obvious that
each non-sink vertex has at least one out-neighbor in $D$.
For a non-sink vertex $u$ and a vertex $v$,
$u \stackrel{*}{\to} v$ means that $v$ is the only out-neighbor of $u$ in $D$.

\begin{Prop}\label{prop:same-partite}
Let $D$ be a multipartite tournament, and $u$ and $v$ be two non-sink vertices belonging to the same partite set in $D$. 
Then the following are true:
\begin{itemize}
\item[(1)] $u$ and $v$ do not compete if and only if $N^+(u) \cap N^+(v) = \emptyset$;
\item[(2)] $u$ and $v$ do not $(1,2)$-compete if and only if $N^+(u) \cup N^+(v) \subseteq X$ for some partite set $X$ of $D$.
\item[(3)] $u$ and $v$ are not adjacent in $C_{1,2}(D)$ if and only if $N^+(u) \cap N^+(v) = \emptyset$ and $N^+(u) \cup N^+(v) \subseteq X$ for some partite set $X$ of $D$.
\end{itemize}

\end{Prop}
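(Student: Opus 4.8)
The plan is to prove each of the three equivalences in turn, noting that (3) is an immediate consequence of (1) and (2) together with the definition $u \sim v$ iff $u \sim_1 v$ or $u \sim_{1,2} v$; so the real work is in (1) and (2). Throughout I will use the standing hypothesis that $u$ and $v$ lie in a common partite set $X_0$ and are both non-sinks, so in particular $N^+(u) \cap X_0 = N^+(v) \cap X_0 = \emptyset$ and each of $N^+(u)$, $N^+(v)$ is nonempty.

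For part (1): the direction ``$N^+(u) \cap N^+(v) = \emptyset \Rightarrow$ no common out-neighbor'' is just the definition of competing, so nothing to do there. Conversely, suppose $w \in N^+(u) \cap N^+(v)$; then $w$ is a common out-neighbor, so $u$ and $v$ compete. Hence (1) is essentially a restatement of the definition and should be dispatched in one or two lines.

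For part (2): one direction is the easier one. If $N^+(u) \cup N^+(v) \subseteq X$ for a single partite set $X$, I want to show $u$ and $v$ cannot $(1,2)$-compete. Suppose for contradiction they do, witnessed by $w$; by symmetry assume there is an arc $(u,w)$ and a directed path $v \to z \to w$ avoiding $u$. Then $w \in N^+(u) \subseteq X$ and $z \in N^+(v) \subseteq X$, so $z$ and $w$ lie in the same partite set $X$, contradicting the existence of the arc $(z,w)$. For the converse, assume $u$ and $v$ do not $(1,2)$-compete; I must produce a single partite set containing $N^+(u) \cup N^+(v)$. The key observation is this: take any $a \in N^+(u)$ and any $b \in N^+(v)$ with $a \neq b$. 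If $a$ and $b$ were in different partite sets, they would be adjacent in the underlying complete multipartite graph, hence there is an arc between them, say $a \to b$ (the case $b \to a$ is symmetric); but then $u \to a \to b$ is a directed path of length $2$, and I would want it to avoid $v$ — here I use that $b \in N^+(v)$ forces $v \ne b$ (else $v$ is an out-neighbor of itself) and $v \ne a$ since $a \in N^+(u) $ and $u, v$ share a partite set so $v \notin N^+(u)$ — together with the arc $(v,b)$ this exhibits $b$ as a $(1,2)$-step common out-neighbor, contradiction. So all of $N^+(u) \cup N^+(v)$ pairwise lies in one partite set. One still has to handle the degenerate possibilities $a = b$ (then $u$ and $v$ would already compete, but that is allowed here — I only need them to fail to $(1,2)$-compete, so I should instead argue directly that two out-neighbors from the same side, say $a, a' \in N^+(u)$, automatically share the partite set $X_0$-complement structure, or more simply note $N^+(u)$ itself lies in one partite set is not automatic and needs the same path argument $u \to a \to a'$).

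The step I expect to be the main obstacle is the converse of (2): making sure every pair of vertices drawn from $N^+(u) \cup N^+(v)$ — including two from $N^+(u)$, two from $N^+(v)$, or one from each, and allowing for the overlap $N^+(u) \cap N^+(v)$ — genuinely forces membership in a common partite set, and that the length-$2$ paths I build really do avoid the forbidden vertex ($v$ in one branch, $u$ in the other). The bookkeeping about which vertex must be avoided, and checking it is distinct from the internal vertex of the path, is where a careless argument could slip; the non-sink hypothesis and the fact that $u,v$ share a partite set (so neither is an out-neighbor of the other) are exactly what make all these distinctness checks go through.
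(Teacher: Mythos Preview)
Your approach is correct and is precisely a fleshed-out version of what the paper dispatches in one line: the paper's entire proof of (1) and (2) is the sentence ``obvious by the definitions,'' and (3) is then immediate. Your handling of (1), of the forward direction of (2), and of the ``one from each side'' case in the converse of (2) is exactly right, including the distinctness checks using that $u,v$ share a partite set.

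The only spot that needs tightening is your last paragraph, where you worry about two out-neighbors $a,a' \in N^+(u)$ on the same side. The path $u \to a \to a'$ you suggest does not by itself witness a $(1,2)$-competition between $u$ and $v$, since nothing guarantees an arc $(v,a')$. But this case does not need a separate argument: once you know that every $a \in N^+(u)$ and $b \in N^+(v)$ with $a \ne b$ lie in a common partite set, simply fix some $b_0 \in N^+(v)$ (it exists because $v$ is not a sink); then every $a \in N^+(u)$ lies in the partite set of $b_0$ (trivially if $a=b_0$, by the cross case otherwise), so $N^+(u)$ sits inside a single partite set $X$, and the symmetric argument gives $N^+(v) \subseteq X$ as well.
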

\begin{proof}
(1) and (2) are obvious by the definitions of `compete' and `$(1,2)$-compete'.
(3) is an immediate consequence of (1) and (2).
\end{proof}

\begin{Prop} \label{prop:one-degree}
Let $D$ be a digraph.
Suppose $u \stackrel{*}{\to} v$ for some vertices $u$ and $v$ in $D$.
Then $u$ and $v$ are not adjacent in $C_{1,2}(D)$.
\end{Prop}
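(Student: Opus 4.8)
The plan is to unwind the definition of adjacency in $C_{1,2}(D)$ and to observe that the hypothesis $u \stackrel{*}{\to} v$ makes the deletion of $v$ fatal for $u$. Recall that $u \stackrel{*}{\to} v$ means $N^+(u)=\{v\}$. Suppose, to the contrary, that $u$ and $v$ are adjacent in $C_{1,2}(D)$. Then, by definition, there is a vertex $w$ distinct from $u$ and $v$ such that either (a) $d_{D-v}(u,w)\le 1$ and $d_{D-u}(v,w)\le 2$, or (b) $d_{D-u}(v,w)\le 1$ and $d_{D-v}(u,w)\le 2$. The key observation is that in both clauses a condition of the form $d_{D-v}(u,w)\le 2$ is imposed (it is even $\le 1$ in clause (a)); that is, the relevant deletion for the vertex $u$ is always the deletion of $v$.

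First I would record the immediate consequence of the hypothesis: since $N^+(u)=\{v\}$, the vertex $u$ has out-degree $0$ in the digraph $D-v$. Consequently there is no directed walk of positive length starting at $u$ in $D-v$, so $d_{D-v}(u,w)$ is undefined (equivalently, ``$=\infty$'') for every vertex $w\neq u$. In particular, $d_{D-v}(u,w)\le 2$ cannot hold for any $w\notin\{u,v\}$.

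Combining these two points finishes the argument: the witness $w$ for the adjacency of $u$ and $v$ would have to satisfy $d_{D-v}(u,w)\le 2$, which we have just seen is impossible. This contradiction shows that no such $w$ exists, hence $u$ and $v$ are not adjacent in $C_{1,2}(D)$.

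There is essentially no obstacle here; the proof is a one-line consequence of the definitions. The only point deserving a moment's care is to notice that \emph{both} clauses in the definition of $C_{1,2}(D)$ constrain $d_{D-v}(u,w)$ (the distance from $u$ in $D$ with $v$ removed), so that the out-degree $0$ of $u$ in $D-v$ rules out both clauses simultaneously.
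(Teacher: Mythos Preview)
Your proof is correct and is essentially the same argument as the paper's: both hinge on the observation that, since $N^+(u)=\{v\}$, the vertex $u$ has no out-neighbor in $D-v$, so $d_{D-v}(u,w)$ is never finite for $w\notin\{u,v\}$, which kills both clauses of the definition of $C_{1,2}(D)$. The paper phrases this directly rather than by contradiction, but the content is identical.
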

\begin{proof}
Since $u \stackrel{*}{\to} v$,
each directed path from $u$ to a vertex $z$ in $V(D)-\{u,v\}$ must traverse $v$ and so there is no directed walk from $u$ to $z$ in $D-v$. 
Therefore $u \not \sim v$.
\end{proof}

\begin{Prop} \label{Prop:different-partite-adjacent}
Let $D$ be a multipartite tournament and $u$ and $v$ be two non-sink vertices with $(u,v)\in A(D)$ belonging to the distinct partite sets in $D$.
Then the following are equivalent:
\begin{itemize}
\item[(i)] $u$ and $v$ are not adjacent in $C_{1,2}(D)$;
\item[(ii)]
either
$u \stackrel{*}{\to} v$, or $N^+(u) \cap N^+(v) = \emptyset$ and there exists a partite set $X$ such that $N^+(v) \subseteq X$ and $N^+(u) \subseteq X \cup \{v\}$.
\end{itemize}
\end{Prop}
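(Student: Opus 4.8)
The plan is to first distill a clean combinatorial criterion for when $u$ and $v$ are adjacent in $C_{1,2}(D)$, and then to negate that criterion and recast the negation in the form (ii). Throughout I will use the following elementary consequences of the hypotheses $(u,v)\in A(D)$ and ``$D$ is a multipartite tournament'': $v\notin N^+(v)$; $u\notin N^+(v)$ (because $u\to v$ forbids $v\to u$); any two vertices joined by an arc lie in distinct partite sets; and any two vertices lying in distinct partite sets are joined by exactly one arc. These are what will let me verify that the directed paths of length $2$ constructed below really avoid the vertex they must avoid.

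\emph{Step 1: an adjacency criterion.} I claim that $u$ and $v$ are adjacent in $C_{1,2}(D)$ if and only if $N^+(u)\cap N^+(v)\neq\emptyset$, or there exist $z\in N^+(u)\setminus\{v\}$ and $y\in N^+(v)$ lying in distinct partite sets. For the ``if'' direction: a vertex in $N^+(u)\cap N^+(v)$ is a common out-neighbour, so $u\sim_1 v$; and if $z\in N^+(u)\setminus\{v\}$ and $y\in N^+(v)$ lie in distinct partite sets then exactly one arc joins them, and I would check that when $y\to z$ the path $v\to y\to z$ (of length $2$, missing $u$ since $y\neq u$ and $z\neq u$) together with the arc $(u,z)$ shows $u$ and $v$ $(1,2)$-compete, while when $z\to y$ the path $u\to z\to y$ (of length $2$, missing $v$ since $z\neq v$ and $y\neq v$) together with the arc $(v,y)$ does the same. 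For the ``only if'' direction I would simply unwind the definitions: a common out-neighbour lies in $N^+(u)\cap N^+(v)$; a $(1,2)$-step common out-neighbour coming from the first bullet of the definition yields an arc $x\to w$ with $x\in N^+(v)$ and $w\in N^+(u)\setminus\{v\}$, and one coming from the second bullet yields an arc $x\to w$ with $x\in N^+(u)\setminus\{v\}$ and $w\in N^+(v)$; in either case the endpoints of that arc lie in distinct partite sets and give the required pair.

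\emph{Step 2: negating the criterion.} By Step 1, statement (i) is equivalent to saying that $N^+(u)\cap N^+(v)=\emptyset$ and every vertex of $N^+(u)\setminus\{v\}$ lies in the same partite set as every vertex of $N^+(v)$. If $N^+(u)\setminus\{v\}=\emptyset$ then, since $u$ is not a sink, $N^+(u)=\{v\}$, i.e. $u\stackrel{*}{\to}v$; this is the first alternative of (ii). If $N^+(u)\setminus\{v\}\neq\emptyset$ then $N^+(v)\neq\emptyset$ (as $v$ is not a sink), and fixing one vertex of $N^+(v)$ and then one of $N^+(u)\setminus\{v\}$ and applying the ``same partite set'' condition twice shows that a single partite set $X$ contains all of $(N^+(u)\setminus\{v\})\cup N^+(v)$; hence $N^+(v)\subseteq X$ and $N^+(u)\subseteq X\cup\{v\}$, the second alternative of (ii). Conversely, if $u\stackrel{*}{\to}v$ then Proposition~\ref{prop:one-degree} gives (i); and if $N^+(u)\cap N^+(v)=\emptyset$ while $N^+(v)\subseteq X$ and $N^+(u)\subseteq X\cup\{v\}$ for some partite set $X$, then $N^+(u)\setminus\{v\}$ and $N^+(v)$ both lie in $X$, so the criterion of Step 1 fails and (i) holds. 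This establishes the equivalence.

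I expect the bulk of the write-up to be routine set-membership bookkeeping. The one spot that genuinely needs care is the verification in Step 1 that the length-$2$ paths $v\to y\to z$ and $u\to z\to y$ do not run through $u$, respectively $v$ — this is exactly where $u\to v$ (hence $v\not\to u$) and the hypothesis $z\neq v$ are used — together with the small ``connectedness'' argument in Step 2 that upgrades the pairwise ``lie in the same partite set'' conclusions to the assertion that one partite set contains $(N^+(u)\setminus\{v\})\cup N^+(v)$ in its entirety.
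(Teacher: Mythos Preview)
Your proof is correct and follows essentially the same approach as the paper's: both arguments hinge on the observation that $u$ and $v$ $(1,2)$-compete precisely when some $z\in N^+(u)\setminus\{v\}$ and some $y\in N^+(v)$ lie in distinct partite sets, and then identify the negation of this with condition~(ii). The only difference is presentational---you isolate this observation as an explicit intermediate criterion in Step~1 and then negate it cleanly, whereas the paper handles the two implications (i)$\Rightarrow$(ii) and (ii)$\Rightarrow$(i) separately, arriving at the same ``same partite set'' constraint by contradiction in each direction.
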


\begin{proof}
We first show that (i) implies (ii).
Suppose that $u$ and $v$ are not adjacent. Then $u$ and $v$ have no common out-neighbor.
If $u \stackrel{*}{\to} v$, then we are done.
Suppose $u \not \stackrel{*}{\to} v$.
Then $N^+(u) -\{v\} \neq \emptyset$.
Since $v$ is a non-sink vertex and $u \to v$,
$N^+(v) - \{u\} \neq \emptyset$. 
Thus 
there exist an out-neighbor $x$ of $u$ and an out-neighbor $y$ of $v$ distinct from $v$ and $u$, respectively.
If $x$ and $y$ belong to different partite sets, then $(x,y)$ or $(y,x) \in A(D)$ and so $u \sim_{1,2}v$, which is a contradiction.
Therefore $x$ and $y$ belong to the same partite set.
Since $x$ and $y$ are arbitrarily chosen,
we conclude that
$N^+(v) \subseteq X$ and $N^+(u) \subseteq X\cup \{v\}$ for some partite set $X$.

Now we show that (ii) implies (i).
If  $u \stackrel{*}{\to} v$
then $u$ and $v$ are not adjacent by Proposition~\ref{prop:one-degree}.
Suppose that  $u \not \stackrel{*}{\to} v$,
$N^+(u) \cap N^+(v)=\emptyset$, and
$N^+(v) \subseteq X$, and $N^+(u) \subseteq X\cup \{v\}$ for some partite set $X$.
Then
 \begin{equation} \label{eq:Lem:different-partite-adjacent} (N^+(u) \cup N^+(v)) - \{u,v\} \subseteq X. \end{equation}
Suppose, to the contrary, that $u$ and $v$ are adjacent. Then, since $N^+(u) \cap N^+(v)=\emptyset$,
$u \sim_{1,2} v$.
Let $w$ be a $(1,2)$-step common out-neighbor of $u$ and $v$.
We first consider the case where there exist a directed $(u,w)$-path $P= u \to u' \to w$ for some vertex $u'$ distinct from $v$ and an arc $(v,w)$.
Then $u'$ (resp.\ $w$) is an out-neighbor of $u$ (resp.\ $v$) distinct from $u$ and $v$.
 Therefore $\{u',w\}\subseteq X$ by \eqref{eq:Lem:different-partite-adjacent}.
 However, $(u',w)$ is an arc on $P$ and so we reach a contradiction.
Thus $u$ and $v$ are not adjacent.
Since we did not use the arc $(u,v)$ in the previous argument, we are still able to apply it to the case where there exist a directed $(v,w)$-path $P= v \to v' \to w$ for some vertex $v'$ distinct from $u$ and an arc $(u,w)$.
\end{proof}

The following corollary is immediately true by Propositions~\ref{prop:same-partite} and \ref{Prop:different-partite-adjacent}.
\begin{Cor} \label{Cor:cor-adajcent}
Let $D$ be a multipartite tournament and $u$, $v$ be two non-sink vertices in $D$.
Then $u$ and $v$ are adjacent in $C_{1,2}(D)$ if the following happen:
\begin{enumerate}[{(1)}]
\item $v$ is not the only out-neighbor of $u$;
\item $u$ is not the only out-neighbor of $v$;
\item for any partite set $X$ with $N^+(v) \subseteq X$ (resp.\ $N^+(u) \subseteq X$),
 $N^+(u)\not\subseteq X \cup \{v\}$ (resp.\ 
 $N^+(v)\not\subseteq X \cup \{u\}$).
\end{enumerate}
\end{Cor}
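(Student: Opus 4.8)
The plan is to split into two cases according to whether $u$ and $v$ lie in a common partite set of $D$ or in distinct partite sets, and in each case to read off the conclusion from the contrapositive of the relevant preliminary proposition.

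In the same-partite case I would invoke Proposition~\ref{prop:same-partite}(3): if $u$ and $v$ failed to be adjacent, there would be a partite set $X$ with $N^+(u)\cup N^+(v)\subseteq X$. The point to check here is that such an $X$ cannot contain $v$ itself; this holds because $u$ is a non-sink vertex, so $N^+(u)$ is a nonempty subset of $X$ disjoint from the common partite set of $u$ and $v$, forcing $X$ to be a different partite set. Then $N^+(v)\subseteq X$ and $N^+(u)\subseteq X\subseteq X\cup\{v\}$, which is exactly the configuration excluded by hypothesis~(3), a contradiction.

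In the distinct-partite case, since the underlying graph is complete multipartite, one of $(u,v)$, $(v,u)$ is an arc of $D$; the ``resp.'' phrasing of hypotheses~(1)--(3) makes the two orientations symmetric, so I would assume $(u,v)\in A(D)$. Now apply Proposition~\ref{Prop:different-partite-adjacent}: non-adjacency of $u$ and $v$ would force either $u\stackrel{*}{\to}v$, which is ruled out by~(1), or else $N^+(u)\cap N^+(v)=\emptyset$ together with a partite set $X$ satisfying $N^+(v)\subseteq X$ and $N^+(u)\subseteq X\cup\{v\}$, which is ruled out by~(3). Either way we reach a contradiction, so $u$ and $v$ are adjacent.

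I do not expect a genuine obstacle, since the statement is essentially a repackaging of the two preceding propositions; the only points requiring care are the two bookkeeping observations flagged above, namely that the excluded partite set $X$ in the first case omits $v$ (which is where the hypothesis that $u$ is non-sink is used) and that the symmetric ``resp.'' form of the hypotheses covers both orientations of the edge between $u$ and $v$ in the second case.
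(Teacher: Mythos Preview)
Your proposal is correct and follows exactly the route the paper indicates: the paper merely states that the corollary is immediate from Propositions~\ref{prop:same-partite} and~\ref{Prop:different-partite-adjacent}, and your case split into same versus distinct partite sets, together with the contrapositive reading of each proposition, is precisely how one unpacks that claim. The two bookkeeping points you flag (that $v\notin X$ in the same-partite case because $u$ is non-sink, and the symmetric handling of the two possible orientations in the distinct-partite case) are the only details needing verification, and you have them right.
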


\begin{Lem}\label{lem:sink}
Let $D$ be a tight multipartite tournament.
If $D$ has a sink, then
$C_{1,2}(D)$ is isomorphic to $K_{|V(D)|-1}$ with an isolated vertex.
\end{Lem}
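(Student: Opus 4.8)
The plan is to show that a sink $z$ in a tight multipartite tournament $D$ is isolated in $C_{1,2}(D)$, while every two vertices of $V(D)-\{z\}$ are adjacent; this gives the claimed isomorphism. First I would dispose of the isolated-vertex claim: since $z$ is a sink, $z$ has no out-neighbor, hence no common out-neighbor with any vertex and no $(1,2)$-step common out-neighbor with any vertex (a $(1,2)$-step common out-neighbor would in particular require an out-neighbor of $z$ in case (b) of the definition, or $z$ to lie on a length-two path out of the other vertex ending at $w$ with $(z,w)$ an arc in case (a) — either way $z$ needs an out-arc). So $z \not\sim v$ for all $v$, and $z$ is isolated.

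Next I would observe a useful structural fact: since $D$ is tight, every partite set is $\{1,2\}$-competing, so in particular no partite set containing two vertices can have all of them be sinks — indeed a sink $z$ cannot $\{1,2\}$-compete with anything, so if $z$ lies in a partite set $Z$ with $|Z|\ge 2$, then tightness is violated. Hence the partite set $Z$ containing the sink $z$ is the singleton $\{z\}$, and consequently \emph{every} vertex of $V(D)-\{z\}$ has $z$ as an out-neighbor (each vertex outside $Z$ is joined to $z$ in the underlying complete multipartite graph, and the arc cannot point into $z$'s... wait, it can point either way — but $z$ is a sink, so the arc must be oriented $v\to z$). In particular every vertex of $V(D)-\{z\}$ is a non-sink.

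Now I would prove that any two vertices $u,v \in V(D)-\{z\}$ are adjacent in $C_{1,2}(D)$. Both $u$ and $v$ have $z$ as an out-neighbor, so $z$ is a common out-neighbor of $u$ and $v$, giving $u\sim_1 v$ directly — \emph{provided} $z \notin \{u,v\}$, which holds, and provided $z$ is distinct from $u$ and $v$, which it is. That is literally all that is needed: the common out-neighbor $z$ witnesses $u\sim v$. So $V(D)-\{z\}$ is a clique, in fact a complete graph on $|V(D)|-1$ vertices. Combining with the first step, $C_{1,2}(D)\cong K_{|V(D)|-1}$ together with the isolated vertex $z$.

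The only subtle point — and the one I would be most careful about — is the deduction that the partite set of a sink is a singleton; this is exactly where tightness is used, and it should be spelled out that a sink fails to $\{1,2\}$-compete with any vertex, hence with any other member of its own partite set, so that member cannot exist. Everything else is routine: the isolated-vertex part uses only that a sink has outdegree $0$, and the clique part uses only that $z$ is a common out-neighbor of every pair in $V(D)-\{z\}$. One might alternatively phrase the clique part via Corollary~\ref{Cor:cor-adajcent}, but invoking the common out-neighbor $z$ is cleaner and avoids checking the three conditions there.
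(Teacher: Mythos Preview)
Your proof is correct and follows the same approach as the paper: both use tightness to conclude that the sink's partite set is a singleton, whence the sink is a common out-neighbor of every other pair of vertices. Your version is simply more detailed than the paper's terse two-line argument, explicitly spelling out why the sink is isolated and why the remaining vertices form a clique.
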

\begin{proof}
	 Suppose that $D$ has a sink $v$. 
	 Then $v$ constitutes a trivial partite set of $D$ and so $v$ is an out-neighbor of the vertices in $V(D)\setminus \{v\}$.
	 Thus the statement is true.
\end{proof}

 \begin{Lem}
 Let $D$ be a tight multipartite tournament.
Then every anti-$\{1,2\}$-competing set has size at most three.
 \end{Lem}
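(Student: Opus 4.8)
The plan is a short proof by contradiction. Suppose $S$ is an anti-$\{1,2\}$-competing set with $|S|\ge 4$; the goal is to exhibit two vertices of $S$ that $\{1,2\}$-compete in $D$.

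The first step is to use tightness to control how $S$ meets the partite sets. If two vertices of $S$ belonged to a common partite set $X$, then, since $D$ is tight, $X$ would be $\{1,2\}$-competing, and hence those two vertices would $\{1,2\}$-compete, contradicting the choice of $S$. Therefore $S$ contains at most one vertex from each partite set of $D$; in particular any two distinct vertices of $S$ lie in different partite sets and so are joined by an arc, which means that $D[S]$ is a tournament on $|S|\ge 4$ vertices.

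The second step is an elementary averaging argument inside $D[S]$. Since the out-degrees of a tournament on $n$ vertices sum to $\binom{n}{2}$, and $\binom{n}{2}>n$ for $n\ge 4$, some vertex $v\in S$ has at least two out-neighbors within $S$; pick two of them, say $a$ and $b$. As $D[S]$ is a tournament there is an arc between $a$ and $b$, say $(a,b)\in A(D)$. Then $v\to b$ and $a\to b$, so $b\notin\{v,a\}$ is a common out-neighbor of $v$ and $a$ in $D$; thus $v\sim_1 a$, i.e.\ $v$ and $a$ $\{1,2\}$-compete. This contradicts $S$ being anti-$\{1,2\}$-competing, and hence every anti-$\{1,2\}$-competing set has size at most three.

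I do not expect a genuine obstacle here: the proof needs only the structural remark that, by tightness, an anti-$\{1,2\}$-competing set meets each partite set at most once, together with the classical fact that a tournament on at least four vertices has a vertex of out-degree at least two. One could equivalently phrase the last step as ``$D[S]$ contains a transitive triple $a\to b\to c$, $a\to c$, and then $a$ and $b$ compete through $c$,'' since a tournament all of whose triples are cyclic has at most three vertices. The argument uses neither Lemma~\ref{lem:sink} nor any hypothesis on sinks.
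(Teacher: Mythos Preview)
Your proof is correct and follows essentially the same approach as the paper's: both observe that tightness forces an anti-$\{1,2\}$-competing set to meet each partite set at most once, so $D[S]$ is a tournament, and then use a pigeonhole/averaging argument on a tournament of order at least four to produce two competing vertices. Your version is in fact slightly cleaner, since the paper first splits off the sink case via Lemma~\ref{lem:sink}, a step your argument shows to be unnecessary.
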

 \begin{proof}
 Suppose that $D$ has an anti-$\{1,2\}$-competing set $S$.
 If $D$ has a sink, then $|S| = 2$ by Lemma~\ref{lem:sink}.
 Now suppose $D$ has no sinks.
Let $X_1,\ldots,X_k$ be the partite sets of $D$ and \[\Lambda=\{i \mid S \cap X_i \neq \emptyset\}.\]
Suppose $|\Lambda| \geq 4$.
We take four vertices in distinct partite sets of $D$. Then they induce the tournament $T$ of order $4$, so there exists a pair of vertices competing in $T$ since $T$ has four vertices and six arcs.
Therefore $|\Lambda| \leq 3 $.
Thus $|S| \leq 3$.
 \end{proof}

 By the above lemma,
each anti-$\{1,2\}$-competing set of a tight multipartite tournament has size one, two, or three.
In each case of the sizes one, two, and three,
we characterize the $(1,2)$-step competition graph of a tight multipartite tournament.

   An anti-$\{1,2\}$-competing set in a digraph $D$
 is \emph{maximum} if $D$ contains no larger anti-$\{1,2\}$-competing set.
 
\section{$C_{1,2}(D)$ for a tight multipartite tournament $D$ with a maximum anti-$\{1,2\}$-competing set of size one}
If $D$ is a multipartite tournament with a maximum anti-$\{1,2\}$-competing set of size one, then $C_{1,2}(D)$ is complete.
In this section, we characterizes the sizes of partite sets of multipartite tournaments whose $(1,2)$-step competition graphs are complete.

Since a tournament of order $k \geq 5$ may be considered as a $k$-partite tournament,
the following is true by Corollary~\ref{Cor:cor-adajcent}.
\begin{Cor} \label{Cor:outdegree-complete}
Let $D$ be a tournament with at least five vertices.
If each vertex in $D$ has outdegree at least two, then $C_{1,2}(D)$ is complete.
\end{Cor}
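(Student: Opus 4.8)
The plan is to invoke Corollary~\ref{Cor:cor-adajcent} directly, viewing the tournament $D$ on $n \ge 5$ vertices as an $n$-partite tournament in which every partite set is a singleton. Fix two distinct vertices $u$ and $v$ of $D$; we must verify the three hypotheses of Corollary~\ref{Cor:cor-adajcent} so as to conclude $u \sim v$, and since $u,v$ are arbitrary this will show $C_{1,2}(D)$ is complete. Since every vertex has outdegree at least two, neither $u$ nor $v$ is a sink, so Corollary~\ref{Cor:cor-adajcent} applies. Moreover $d^+(u)\ge 2$ means $v$ cannot be the only out-neighbor of $u$, giving hypothesis~(1), and symmetrically $d^+(v)\ge 2$ gives hypothesis~(2).

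The only remaining point is hypothesis~(3). Because $D$ is a tournament, each partite set is a singleton, so a partite set $X$ with $N^+(v)\subseteq X$ would force $|N^+(v)|\le 1$, contradicting $d^+(v)\ge 2$; hence no such $X$ exists and the condition ``for any partite set $X$ with $N^+(v)\subseteq X$, $N^+(u)\not\subseteq X\cup\{v\}$'' holds vacuously, and likewise with the roles of $u$ and $v$ interchanged. Thus all three hypotheses of Corollary~\ref{Cor:cor-adajcent} are met, so $u$ and $v$ are adjacent in $C_{1,2}(D)$.

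There is essentially no obstacle here: the statement is a specialization of Corollary~\ref{Cor:cor-adajcent} to the tournament case, where the ``partite set'' conditions collapse trivially since partite sets have size one. The hypothesis $n\ge 5$ is what lets us regard $D$ as a multipartite (i.e.\ at least tripartite) tournament so that the earlier machinery applies; it is not otherwise needed in the argument. One should simply remark that $u$ and $v$ were arbitrary to conclude completeness of $C_{1,2}(D)$.
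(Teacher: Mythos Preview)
Your proof is correct and follows exactly the approach the paper indicates: regard $D$ as an $n$-partite tournament and apply Corollary~\ref{Cor:cor-adajcent}, noting that conditions (1) and (2) hold because $d^+\ge 2$ and condition (3) holds vacuously since singleton partite sets cannot contain an out-neighborhood of size at least two. One small inaccuracy in your closing remark: the bound $n\ge 5$ is not what makes $D$ multipartite in the paper's sense (that only needs $n\ge 3$); rather, for $n\le 4$ the hypothesis ``every vertex has outdegree at least two'' is vacuous by arc-counting, so $n\ge 5$ is simply the first nontrivial case.
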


A tournament $D$ is {\it regular} provided all vertices in $D$ have the same out-degree.
We say that $D$ is {\it near regular} provided the largest difference between the out-degrees of any two vertices is $1$.
It is a well-known fact that, for each positive integer $n$, there exists a regular tournament of order $n$ if $n$ is odd and a near regular tournament when if $n$ is even.
Since a regular or near regular tournament with at least five vertices has minimum outdegree at least two,
the following is immediately true by Corollary~\ref{Cor:outdegree-complete}.
\begin{Lem} \label{lem:existence-tournament}
For $n \geq 5$, there exists a tournament of order $n$ whose $(1,2)$-step competition graph is complete.
\end{Lem}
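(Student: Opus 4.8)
The plan is to reduce everything to Corollary~\ref{Cor:outdegree-complete}. That corollary already tells us that a tournament $D$ on at least five vertices in which every vertex has outdegree at least two has complete $(1,2)$-step competition graph. Hence it suffices to produce, for every $n\ge 5$, a tournament of order $n$ whose minimum outdegree is at least $2$, and then apply the corollary.

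For odd $n$ I would use an explicit rotational (circulant) tournament $R_n$ on the vertex set $\ZZ_n$ in which $i\to j$ precisely when $j-i\pmod n$ lies in $\{1,2,\dots,(n-1)/2\}$. Since $n$ is odd, for any two distinct vertices $i,j$ exactly one of $j-i$ and $i-j$ (mod $n$) falls in this range, so $R_n$ is indeed a tournament, and every vertex has outdegree $(n-1)/2$. For $n\ge 5$ this equals or exceeds $2$, so $R_n$ does the job. For even $n$ I would invoke the well-known fact, already recalled just before the statement, that a near regular tournament of order $n$ exists: since the outdegrees of a tournament of order $n$ sum to $\binom{n}{2}$, a short count forces exactly $n/2$ vertices to have outdegree $n/2$ and the other $n/2$ to have outdegree $n/2-1$, so the minimum outdegree is $n/2-1\ge 2$ whenever $n\ge 6$ (and the only even $n\ge 5$ are the $n\ge 6$). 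In both cases the constructed tournament has minimum outdegree at least $2$, and Corollary~\ref{Cor:outdegree-complete} finishes the proof.

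There is essentially no real obstacle here; the only point needing a little care is the even case, where I lean on the cited existence of near regular tournaments rather than on a circulant construction. If one preferred to keep the argument self-contained, one could instead build a near regular tournament of even order $n$ by deleting a single vertex from the rotational tournament $R_{n+1}$: the deleted vertex has indegree $n/2$ there, so $n/2$ of the surviving vertices lose one from their outdegree of $n/2$ and the rest keep it, giving outdegrees $n/2$ and $n/2-1$, hence minimum outdegree $n/2-1\ge 2$ for $n\ge 6$. Either route yields the claim.
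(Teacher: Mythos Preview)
Your proposal is correct and follows essentially the same approach as the paper: the paper simply notes that regular (for odd $n$) and near regular (for even $n$) tournaments exist and have minimum outdegree at least two when $n\ge 5$, and then invokes Corollary~\ref{Cor:outdegree-complete}. Your version is more explicit---you spell out the circulant construction $R_n$ for the odd case and compute the outdegree distribution in the near regular case---but the underlying idea is identical.
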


Given a graph $G$, two vertices $u$ and $v$ of $G$ are said to be \emph{true twins} if they have the same closed neighborhood.
We may introduce an analogous notion for a digraph.
Given a digraph $D$, two vertices $u$ and $v$ of $D$ are said to be \emph{true twins} if they have the same open out-neighborhood and open in-neighborhood.

Let $D$ be a digraph. If there is a directed path of length $2$ from a vertex $x$ to a vertex $y$ in $D$, we call $y$ a \emph{$2$-step out-neighbor} of $x$.

\begin{Lem} \label{lem:true-twin}
If two non-sink vertices are true twins in a digraph $D$, then they are true twins in $C_{1,2}(D)$.
\end{Lem}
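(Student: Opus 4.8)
The plan is to produce an automorphism of $D$ that interchanges $u$ and $v$ while fixing every other vertex, transfer it to $C_{1,2}(D)$, and then argue separately that $u$ and $v$ are themselves adjacent in $C_{1,2}(D)$.

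First I would note that true twins in a simple digraph are never joined by an arc: if $v\in N^+(u)$, then $v\in N^+(v)$ because $N^+(u)=N^+(v)$, contradicting the absence of loops, and symmetrically $u\notin N^+(v)$. Consequently the transposition $\sigma$ of $V(D)$ that swaps $u$ and $v$ and fixes every other vertex is a digraph automorphism of $D$. Indeed, for an arc with neither endpoint in $\{u,v\}$ there is nothing to check; if an arc has $u$ as its tail, then $(u,b)\in A(D)$ iff $b\in N^+(u)=N^+(v)$ iff $(\sigma(u),\sigma(b))=(v,b)\in A(D)$, and the analogous statement with $u$ as head uses $N^-(u)=N^-(v)$; and since no arc joins $u$ and $v$ in either direction, $\sigma$ preserves that too. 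The same checks apply with $u$ and $v$ exchanged.

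Next I would use the general fact that every automorphism $\phi$ of $D$ is also an automorphism of $C_{1,2}(D)$: for each vertex $x$, $\phi$ restricts to an isomorphism from $D-x$ onto $D-\phi(x)$, so $d_{D-v}(u,w)=d_{D-\phi(v)}(\phi(u),\phi(w))$ for all vertices $u,v,w$ with $u,w\neq v$; hence $\phi$ carries a witness $w$ of an edge $uv$ of $C_{1,2}(D)$ to a witness $\phi(w)$ of the edge $\phi(u)\phi(v)$, and conversely. Applying this to $\sigma$, for every $w\in V(D)\setminus\{u,v\}$ we get that $w\sim u$ if and only if $w=\sigma(w)\sim\sigma(u)=v$; that is, $u$ and $v$ have exactly the same neighbors in $C_{1,2}(D)$ among the vertices different from $u$ and $v$.

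It remains to check that $u\sim v$, which is where the hypothesis that $u$ and $v$ are non-sinks enters: since $N^+(u)=N^+(v)\neq\emptyset$, any vertex $w$ in this set is a common out-neighbor of $u$ and $v$ and is distinct from both (again because $D$ has no loops and $N^+(u)=N^+(v)$), so $u\sim_1 v$. Combined with the previous paragraph, this shows that $u$ and $v$ have the same closed neighborhood in $C_{1,2}(D)$, i.e.\ they are true twins in $C_{1,2}(D)$. I do not expect a real obstacle; the only point that needs attention is that ``true twins'' refers to \emph{closed} neighborhoods, so the edge $uv$ in $C_{1,2}(D)$ must be secured separately and genuinely requires $u$ and $v$ to be non-sinks. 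If the automorphism bookkeeping seems heavy, a direct alternative is to show that any directed path of length at most two out of $u$ can be rerouted out of $v$ by replacing its initial vertex $u$ with $v$ — legitimate since the second vertex of such a path lies in $N^+(u)=N^+(v)$ and hence differs from $v$ — and then to verify that this rerouting respects the vertex-deletions in the definition of $C_{1,2}(D)$.
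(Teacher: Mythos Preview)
Your argument is correct. The paper takes a different, more hands-on route: after establishing that no arc joins $u$ and $v$ and that $u\sim_1 v$ (exactly as you do), it fixes an arbitrary vertex $w\neq v$ with $u\sim w$ and shows $v\sim w$ by a case analysis on whether $u\sim_1 w$ or $u\sim_{1,2} w$, and in the latter case on whether the $(1,2)$-step common out-neighbor equals $v$ or not. Your approach replaces this case analysis with a single structural observation: the transposition swapping $u$ and $v$ is a digraph automorphism, and the $C_{1,2}$ construction is automorphism-equivariant because $\phi$ induces an isomorphism $D-x\to D-\phi(x)$. This is cleaner and immediately generalizes to $C_{i,j}$ for any $i,j$ (or indeed to any graph functor that is isomorphism-invariant), whereas the paper's argument is specific to the $(1,2)$-step case. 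The paper's direct verification, on the other hand, is entirely self-contained and does not require the reader to check the equivariance claim. Your final paragraph's ``direct alternative'' is essentially the paper's strategy, though the paper carries it out more carefully than a one-line rerouting, precisely because the witness $y$ of $u\sim_{1,2} w$ may coincide with $v$, in which case one cannot simply reuse $y$ for $v$ and $w$ and a different witness must be produced.
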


\begin{proof}
Suppose that there exist two vertices $u$ and $v$ which are true twins in $D$.
Since $D$ is loopless, there is no arc between $u$ and $v$, that is,
\begin{equation} \label{eq:lem:true-twin_1} u \not \to v \quad \text{and} \quad  v \not \to u.	
\end{equation}
Since $u$ is a non-sink vertex,
$u$ has an out-neighbor $x$.
Then $x\neq v$ by \eqref{eq:lem:true-twin_1}.
Since $u$ and $v$ are true twins,
$x$ is also an out-neighbor of $v$.
Then $x \neq u$ by \eqref{eq:lem:true-twin_1}.
Therefore $u \sim_1 v$.

Take a vertex $w$ distinct from $v$ among the vertices adjacent to $u$ in $C_{1,2}(D)$.
If $u \sim_1 w$, then there exists a common out-neighbor of $u$ and $w$ distinct from $v$ by \eqref{eq:lem:true-twin_1} and so the common out-neighbor is also a common out-neighbor of $v$ and $w$, which implies $v \sim_1 w$.
Suppose $u \sim_{1,2} w$.
Then $u$ and $w$ have a $(1,2)$-step common out-neighbor $y$ distinct from $u$ and $w$.
Therefore (a) $w \to y$ and $u\to a \to y$ for some vertex $a$ with $a\neq w$ or (b) $u \to y$ and $w\to b \to y$ for some vertex $b$ with $b\neq u$.

{\it Case 1}. $y=v$.
Then, by \eqref{eq:lem:true-twin_1}, (b) cannot hold and so (a) holds.
 Since $u$ and $v$ are true twins,
$w \to u$ and $v\to a$.
Thus $w \to u \to a$ and $v\to a$.
Then, since $D$ is loopless, $a\neq v$ and so $v \sim_{1,2} w$.

{\it Case 2}. $y\neq v$.
If (a) holds, then $v \to a \to y$ since $u$ and $v$ are true twins, which implies $v \sim_{1,2} w$.
Now suppose that (b) holds.
Since $u$ and $v$ are true twins, $v \to y$.
Then, since $b\neq v$, $y$ is a $(1,2)$-step common out-neighbor of $v$ and $w$.
Therefore, in both cases, $v \sim_{1,2}w$ and so the statement is true.
\end{proof}

\begin{Lem} \label{lem:making-multi-complete}
Let $k$ be a positive integer with $k \geq 3$; $n_1,\ldots, n_k$ be positive integers such that $n_1 \geq \cdots \geq n_k$;  $n'_1, \ldots ,n'_k$ be positive integers such that
$n'_1 \geq \cdots \geq n'_k$, $n'_1 \geq n_1$,
$n'_2 \geq n_2,\ldots$, and $n'_k \geq n_k$.
If $D$ is an orientation of $K_{n_1,\ldots,n_k}$ whose $(1,2)$-step competition graph is complete,
then there exists an orientation $D'$ of $K_{n'_1,\ldots,n'_k}$ whose $(1,2)$-step competition graph is complete.
\end{Lem}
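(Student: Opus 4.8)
The plan is to build $D'$ from $D$ by repeatedly adjoining \emph{true twins}. First I would reduce to the case $\sum_i n'_i = \sum_i n_i + 1$: since $n_1 \ge \cdots \ge n_k$, $n'_1 \ge \cdots \ge n'_k$, and $n'_i \ge n_i$ for every $i$, one can pass from the multiset $\{n_1,\dots,n_k\}$ to $\{n'_1,\dots,n'_k\}$ by a sequence of $\sum_i(n'_i-n_i)$ steps, each increasing one part by $1$. So it suffices to show that from any orientation $D$ of $K_{m_1,\dots,m_k}$ (with $k\ge 3$ and all $m_j \ge 1$) whose $(1,2)$-step competition graph is complete one can produce an orientation of $K_{m_1,\dots,m_i+1,\dots,m_k}$ with complete $(1,2)$-step competition graph, and then iterate. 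Here I use that $K_{m_1,\dots,m_k}$ depends only on the multiset of the $m_j$, so the intermediate tuples need not be sorted.

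For the one-vertex step, note first that $C_{1,2}(D)$ being complete forces $D$ to have no sink: $D$ is a tight multipartite tournament because each of its partite sets is a clique in $C_{1,2}(D)$, so if $D$ had a sink, then by Lemma~\ref{lem:sink} the graph $C_{1,2}(D)$ would have an isolated vertex, which is impossible since $|V(D)|\ge k\ge 3$. Now fix the target partite set $X_i$ (nonempty since $m_i\ge 1$) and a vertex $u\in X_i$, and form $D'$ by adding a new vertex $u'$ to $X_i$ with $N^+_{D'}(u') = N^+_D(u)$ and $N^-_{D'}(u') = N^-_D(u)$ and no other new arcs. Since $N^+_D(u)$ and $N^-_D(u)$ partition $V(D)\setminus X_i$, the digraph $D'$ is an orientation of $K_{m_1,\dots,m_i+1,\dots,m_k}$, the vertices $u$ and $u'$ are true twins in $D'$, and every vertex of $D'$ still has an out-neighbour ($u'$ inherits those of $u$, and each old vertex keeps its $D$-out-neighbours), so $D'$ has no sink. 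In particular $u$ and $u'$ are non-sink true twins in $D'$, so by Lemma~\ref{lem:true-twin} they are true twins in $C_{1,2}(D')$.

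It remains to check that $C_{1,2}(D')$ is complete. For two old vertices $a,b\in V(D)$ I would invoke monotonicity of $C_{1,2}$ under enlarging the digraph: if $a\sim b$ in $C_{1,2}(D)$, then $a\sim b$ in $C_{1,2}(D')$, because $D$ is a subdigraph of $D'$ on a subset of its vertices, so every witnessing arc and every witnessing directed path of length $2$ avoiding the relevant vertex survives in $D'$, and distances in $D'-a$ and $D'-b$ do not exceed those in $D-a$ and $D-b$; hence $C_{1,2}(D')$ is complete on $V(D)$. Finally, for a pair $\{a,u'\}$ with $a\in V(D)$: if $a=u$, then $a\sim u'$ since true twins in a graph lie in each other's closed neighbourhood, hence are adjacent (this is also clear from the proof of Lemma~\ref{lem:true-twin}); and if $a\ne u$, then $a\sim u$ in $C_{1,2}(D')$ by the previous paragraph, so $a$ lies in $N_{C_{1,2}(D')}[u]=N_{C_{1,2}(D')}[u']$ and thus $a\sim u'$. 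Therefore $C_{1,2}(D')$ is complete, which completes the induction step and the proof.

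The only point that needs genuine care, and hence the main obstacle, is the monotonicity claim that adjacency in $C_{1,2}$ is preserved when passing from $D$ to the larger digraph $D'$; this is where one must handle the ``not traversing $u$ / $v$'' clauses in the definition, but since we add only a single vertex and arcs incident to it, each witness for $a\sim b$ in $D$ transfers verbatim to $D'$. Everything else is bookkeeping together with two applications of Lemma~\ref{lem:true-twin}.
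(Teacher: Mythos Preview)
Your proof is correct and follows essentially the same approach as the paper: reduce to adding one vertex at a time, adjoin a true twin of an existing vertex in the target partite set, and then invoke Lemma~\ref{lem:true-twin} together with the monotonicity of $C_{1,2}$ under passing to a superdigraph. If anything, you are more explicit than the paper about the multiset reduction and about why old adjacencies survive in $C_{1,2}(D')$; the paper compresses the latter into a single line.
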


\begin{proof}
Suppose that $D$ is an orientation of $K_{n_1,\ldots,n_k}$ whose $(1,2)$-step competition graph is complete.
Let $X_1,X_2,\ldots,X_k$ be the partite sets of $D$  satisfying $|X_i|=n_i$ for each $1 \leq i \leq k$.
Then we construct an orientation of $K_{n'_1,n_2,\ldots,n_k}$ whose $(1,2)$-step competition graph is complete in the following way.
If $n'_1 = n_1$, then we take $D$ as a desired orientation.
Suppose $n'_1 > n_1$.
Then we add a new vertex $v$ to $X_1$ and an arc $(v,x)$ for each out-neighbor $x$ of some vertex $u$ in $X_1$ to obtain a digraph $H_1$ so that
\[
 A(D)\subset A(H_1), \quad \text{and} \quad  N^+_D(u)= N^+_{H_1}(u) = N^+_{H_1}(v). \]
Therefore $N^-_D(u) =N^-_{H_1}(u)= N^-_{H_1}(v)$ and so $u$ and $v$ are true twins in $H_1$.
Since $C_{1,2}(D)$ is complete and $|V(D)| \geq 2$,
$N^+_{D}(u) \neq \emptyset$.
Therefore $C_{1,2}(H_1)$ is complete by Lemma~\ref{lem:true-twin}.
We may repeat this process until we obtain a desired orientation $H_{n'_1-n_1}$.
Inductively, we obtain an orientation $H_{t}$ of $K_{n'_1,\ldots,n'_k}$ whose $(1,2)$-step competition graph is complete where $t=(n'_1+\cdots+n'_k)-(n_1+\cdots+n_k)$.
Therefore the statement is true.
\end{proof}

The following theorem characterizes the sizes of partite sets of multipartite tournaments whose $(1,2)$-step competition graphs are complete.

\begin{Thm} \label{thm:cha-complete}
Let $k$ be a positive integer with $k \geq 3$ and $n_1,n_2,\ldots,n_k$ be positive integers such that $n_1 \geq \cdots \geq n_k$.
There exists an orientation $D$ of $K_{n_1,n_2,\ldots,n_k}$ whose $(1,2)$-step competition graph is complete if and only if one of the following holds.
\begin{itemize}
\item[(a)] $k=3$, and (i) $n_2 \geq 3$ and $n_3=1$ or (ii) $n_3 \geq 2$.
\item[(b)] $k=4$, and (i) $n_1 \geq 3$ and $n_2=1$ or (ii) $n_2 \geq 2$.
\item[(c)] $k \geq 5$.
\end{itemize}
\end{Thm}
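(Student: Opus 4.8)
The plan is to prove the two implications separately: sufficiency by reducing to a handful of ``minimal'' multipartite tournaments via Lemma~\ref{lem:making-multi-complete}, and necessity by first forcing every vertex to have out-degree at least two, then killing the forbidden tuples by an arc-count together with one genuinely structural case.

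\textbf{Sufficiency.} By Lemma~\ref{lem:making-multi-complete} it suffices to realize, for each admissible family, a minimal representative: $K_{2,2,2}$ and $K_{3,3,1}$ when $k=3$; $K_{2,2,1,1}$ and $K_{3,1,1,1}$ when $k=4$; and the $k$-vertex tournament $K_{1,\ldots,1}$ (with $k$ ones) when $k\ge 5$, the last being covered by Lemma~\ref{lem:existence-tournament}. One checks directly that every tuple permitted by (a)--(c) dominates, coordinatewise in non-increasing order, one of these base tuples. To deal with the three six-vertex examples I would first record the easy fact, immediate from Corollary~\ref{Cor:cor-adajcent} and Proposition~\ref{prop:same-partite}(3): \emph{if every vertex of a multipartite tournament has out-degree at least $2$ and no vertex has all of its out-neighbors inside a single partite set, then its $(1,2)$-step competition graph is complete.} The orientations I have in mind for $K_{2,2,2}$, $K_{2,2,1,1}$ and $K_{3,1,1,1}$ satisfy these two conditions --- for $K_{2,2,2}$ one may take the orientation in which every vertex beats exactly one vertex in each of the other two parts --- so completeness follows at once. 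For $K_{3,3,1}$ this criterion is unattainable, because the singleton part forces the out-neighborhood of some vertex into one part; here I would instead exhibit the rotation-symmetric orientation in which the singleton beats the whole first $3$-element part and loses to the whole second $3$-element part, and verify the finitely many remaining adjacencies directly from Corollary~\ref{Cor:cor-adajcent} and Proposition~\ref{prop:same-partite} (the pairs inside each $3$-element part simply compete).

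\textbf{Necessity.} Suppose $D$ is an orientation of $K_{n_1,\ldots,n_k}$ with $C_{1,2}(D)$ complete. A sink of $D$ is isolated in $C_{1,2}(D)$ by Lemma~\ref{lem:sink}, and if $v$ has out-degree $1$ then $v\stackrel{*}{\to}w$ for its unique out-neighbor $w$, so $v\not\sim w$ by Proposition~\ref{prop:one-degree}; hence every vertex of $D$ has out-degree at least $2$. Now assume none of (a)--(c) holds, so $k\in\{3,4\}$; negating the conditions, $D$ must be $K_4$, $K_{2,1,1,1}$, $K_{n_1,1,1}$, or $K_{n_1,2,1}$. In the first three, $D$ has $6$, $9$, and $2n_1+1$ arcs, while $2|V(D)|$ equals $8$, $10$, and $2n_1+4$ respectively; since the number of arcs is strictly less than $2|V(D)|$, some vertex has out-degree at most $1$, a contradiction. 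This leaves $D=K_{n_1,2,1}$ with parts $X_1=\{a_1,\ldots,a_{n_1}\}$, $X_2=\{b_1,b_2\}$, $X_3=\{c\}$. Each $a_i$ is adjacent in $D$ only to $b_1,b_2,c$, so out-degree at least $2$ forces each $a_i$ to lose to at most one of $b_1,b_2,c$. I would then split on the two arcs joining $\{b_1,b_2\}$ to $c$. If both point into $c$, then $N^+(c)\subseteq X_1$ and, by Proposition~\ref{Prop:different-partite-adjacent}, $c\sim b_1$ only if they have a common out-neighbor, necessarily some $a_i\in X_1$ beaten by both $c$ and $b_1$; likewise for $b_2$. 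If both point away from $c$, then $N^+(b_1),N^+(b_2)\subseteq X_1$ and, by Proposition~\ref{prop:same-partite}(3), $b_1\sim b_2$ only if some $a_i$ is beaten by both $b_1$ and $b_2$. If one points each way, say $b_1\to c$ and $c\to b_2$, then $N^+(b_2)\subseteq X_1$ and, by Proposition~\ref{Prop:different-partite-adjacent}, $c\sim b_2$ only if some $a_i$ is beaten by both $c$ and $b_2$. In every case this produces an $a_i$ losing to two of $b_1,b_2,c$, contradicting the previous sentence. Hence none of the four tuples admits such an orientation, and the theorem follows.

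\textbf{Main obstacle.} The only step that is not routine bookkeeping is the $K_{n_1,2,1}$ case of necessity: for $n_1\ge 4$ the crude arc count no longer beats $2|V(D)|$, so one must genuinely invoke Propositions~\ref{prop:same-partite} and~\ref{Prop:different-partite-adjacent} to show that the relevant pair among $b_1,b_2,c$ cannot $(1,2)$-compete and therefore must compete, which is what plants the forbidden common out-neighbor inside $X_1$. A subsidiary point that needs care is the verification that the admissible tuples of (a)--(c) are exactly those dominating the listed base tuples, since this is precisely what legitimizes the reduction through Lemma~\ref{lem:making-multi-complete}.
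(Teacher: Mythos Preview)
Your proposal is correct and follows essentially the same route as the paper: sufficiency via the four base orientations $K_{3,3,1},K_{2,2,2},K_{3,1,1,1},K_{2,2,1,1}$ plus Lemma~\ref{lem:making-multi-complete} (and Lemma~\ref{lem:existence-tournament} for $k\ge 5$), and necessity via the minimum out-degree~$2$ bound, the arc count $2|V(D)|\le |A(D)|$, and a structural analysis of $K_{n_1,2,1}$. The one organizational difference worth noting is in that last case: you case-split on the two arcs between $\{b_1,b_2\}$ and $c$ and, in each branch, pick the pair whose out-neighborhoods are forced into $X_1$ (or $X_1\cup\{v\}$) so that Propositions~\ref{prop:same-partite}/\ref{Prop:different-partite-adjacent} reduce adjacency to competing; the paper instead argues uniformly by locating where the $(1,2)$-step common out-neighbors of $\{v_1,v_3\}$ and $\{v_2,v_3\}$ can lie, deducing $v_3\to v_1,v_2$, and then applying Proposition~\ref{prop:same-partite} to $v_1,v_2$. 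Both arguments terminate at the same contradiction---some $a_i\in X_1$ beaten by two of $b_1,b_2,c$---so the difference is cosmetic.
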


\begin{proof}
We first show the ``only if" part.
Suppose that there exists an orientation $D$ of $K_{n_1,n_2,\ldots,n_k}$ whose $(1,2$)-step competition graph is complete.
Then, since $k \geq 3$, $|V(D)| \geq 3$ and so each vertex has outdegree at least $1$ in $D$.
If there exists a vertex $v$ of outdegree $1$ in $D$, then there exists a vertex nonadjacent to $v$ in $C_{1,2}(D)$ by Proposition~\ref{prop:one-degree}.
Therefore \begin{equation} \label{eq:thm:cha-complete}
d^+(v) \geq 2 \end{equation} for each vertex $v$ in $D$.
Thus \[2|V(D)| \leq |A(D)|.\]
Let $X_1,\ldots,X_k$ be the partite sets of $D$ satisfying $|X_i|=n_i$ for each $1\leq i \leq k$.

Suppose $k=3$.
Then, if $n_2=1$, then $|V(D)|=n_1+2$ and so $|A(D)| = 2 n_1 + 1$, which contradicts $2|V(D)| \leq |A(D)|$.
Therefore $n_2 \geq 2$.
To show by contradiction, suppose $n_2=2$ and $n_3 =1$.
Let $X_2=\{v_1,v_2\}$, $X_3=\{v_3\}$.
Then
each vertex in $X_1$ is not a common out-neighbor of two vertices in $X_2 \cup X_3$ by~\eqref{eq:thm:cha-complete}.
Therefore each pair of $\{v_1,v_3\}$ and $\{v_2,v_3\}$ has a $(1,2)$-step common out-neighbor in $D$.

Let $u$ be a $(1,2)$-step common out-neighbor of $v_1$ and $v_3$.
Then $u \in N^+(v_1)$ or $u\in N^+(v_3)$.
Suppose $u \in N^+(v_1)$. Then $u \in X_1$ and there exists a $(v_3,u)$-directed path $P$ of length $2$ not traversing $v_1$.
Thus the interior point on the directed path must be $v_2$ and so $(v_2,u)\in A(D)$.
Hence $u$ has outdegree at most one, a contradiction to \eqref{eq:thm:cha-complete}.
Therefore $u\in N^+(v_3)$.
Then $u$ is a $2$-step out-neighbor of $v_1$.
If $u \in X_1$, then each $(v_1,u)$-directed path of length $2$ must traverse $v_3$ and so $v_1$ and $v_3$ cannot have a $(1,2)$-step common out-neighbor, a contradiction.
Therefore $u \notin X_1$.
By symmetry, any $(1,2)$-step common out-neighbor of $v_2$ and $v_3$ does not belong to $X_1$.
Thus $u =v_2$ and $v_1$ is the only $(1,2)$-step common out-neighbor of $v_2$ and $v_3$.
Hence $v_1$ and $v_2$ must be $2$-step out-neighbors of $v_2$ and $v_1$, respectively, and so out-neighbors of $v_3$.
Therefore $N^+(v_1) \cup N^+(v_2) \subseteq X_1$.
Thus $v_1$ and $v_2$ do not $(1,2)$-compete and so have a common out-neighbor $x$.
Then $x\in X_1$ and $d^+(x) \leq 1$, which contradicts~\eqref{eq:thm:cha-complete}.
Thus
$n_2\geq 3$ or $n_3\geq 2$ and so (a) holds.

Suppose $k=4$ and $n_2=1$.
Then $|V(D)|=n_1+3$ and
$|A(D)|=3n_1+3$.
By \eqref{eq:thm:cha-complete},
$2|V(D)|=2(n_1+3)  \leq |A(D)|$ and so
$n_1 \geq 3$.
Therefore (b) holds. Thus we have shown that the ``only if" part is true.

Now we show the ``if" part.

{\it Case 1}. $k =3$ or $4$.
We consider orientations $D_1$, $D_2$, $D_3$, and $D_4$ of $K_{3,3,1}$, $K_{2,2,2}$, $K_{3,1,1,1}$, and $K_{2,2,1,1}$, respectively, given in Figure~\ref{fig:tournaments-complete}  whose $(1,2)$-step competition graphs are complete.
By applying to Lemma~\ref{lem:making-multi-complete} to $D_1$, $D_2$, $D_3$, and $D_4$, respectively,
we may obtain an orientation of $K_{n_1,n_2,\ldots,n_k}$ whose $(1,2)$ competition graph is complete when
(a) $k=3$, and (i) $n_2 \geq 3$ and $n_3=1$ or (ii) $n_3 \geq 2$;
(b) $k=4$, and (i) $n_1 \geq 3$ and $n_2=1$ or (ii) $n_2 \geq 2$.

{\it Case 2}. $k \geq 5$.
We obtain a tournament $D$ of order $k$ whose $(1,2)$-step competition graph is complete by Lemma~\ref{lem:existence-tournament}.
Then, by applying to Lemma~\ref{lem:making-multi-complete} to $D$,
we may obtain an orientation of $K_{n_1,n_2,\ldots,n_k}$ whose $(1,2)$-step competition graph is complete.
Therefore the ``if'' part is true.
\end{proof}

\begin{figure}
\begin{center}
\begin{tikzpicture}[x=1.0cm, y=1.0cm]
   \vertex (x1) at (0,0) [label=above:$$]{};
   \vertex (x2) at (0,1.5) [label=above:$$]{};
   \vertex (x3) at (0,3) [label=above:$$]{};
   \vertex (x4) at (1.5,0) [label=above:$$]{};
   \vertex (x5) at (1.5,1.5) [label=above:$$]{};
   \vertex (x6) at (1.5,3) [label=above:$$]{};
   \vertex (x7) at (3,3) [label=above:$$]{};
   \path[->,thick]
   (x3) edge [out=45,in=135] (x7)
   (x3) edge (x5)
   (x2) edge (x6)
   (x2) edge [out=30,in=-160] (x7)
   (x1) edge (x5)
   (x1) edge (x4)
   (x6) edge (x3)
   (x6) edge (x1)
   (x5) edge (x7)
   (x5) edge  (x2)
   (x4) edge (x2)
   (x4) edge  (x3)
   (x7) edge  (x4)
   (x7) edge (x6)
   (x7) edge  [out=-90,in=20] (x1)
	;
\draw (1.5, -1) node{$D_1$};
\end{tikzpicture}
\qquad \qquad \qquad \qquad
\begin{tikzpicture}[x=1.0cm, y=1.0cm]
   \vertex (x1) at (0,0) [label=above:$$]{};
   \vertex (x2) at (0,1.5) [label=above:$$]{};
   \vertex (x3) at (1.5,0) [label=above:$$]{};
   \vertex (x4) at (1.5,1.5) [label=above:$$]{};
   \vertex (x5) at (3,0) [label=above:$$]{};
   \vertex (x6) at (3,1.5) [label=above:$$]{};
   \path[->,thick]
   (x1) edge (x3)
   (x3) edge  (x5)
   (x5) edge  [out=-135,in=-45] (x1)
   (x4) edge (x2)
   (x6) edge  (x4)
   (x2) edge [out=45,in=135] (x6)
   (x2) edge  [out=-25,in=140] (x5)
   (x4) edge  (x1)
   (x6) edge (x3)
   (x1) edge (x6)
   (x3) edge [out=135,in=-55] (x2)
   (x5) edge [out=120,in=-35] (x4)
	;
\draw (1.5, -1.5) node{$D_2$};
\end{tikzpicture}

\begin{tikzpicture}[x=1.0cm, y=1.0cm]

   \tikzset{middlearrow/.style={decoration={
  markings,
  mark=at position 0.95 with
  {\arrow{#1}},
  },
  postaction={decorate}
  }
  }

   \vertex (x1) at (0,0) [label=above:$$]{};
   \vertex (x2) at (0,1.5) [label=above:$$]{};
   \vertex (x3) at (0,3) [label=above:$$]{};
   \vertex (x4) at (1.5,1.5) [label=above:$$]{};
   \vertex (x5) at (3,1.5) [label=above:$$]{};
   \vertex (x6) at (4.5,1.5) [label=above:$$]{};
 \path[->,thick]
   (x3) edge  (x4)
  (x3) edge [out=-20,in=135](x5)
   (x2) edge [out=-35,in=-115](x5)
   (x2) edge [out=-45,in=-135](x6)
   (x1) edge (x4)
   (x1) edge [out=0,in=-100] (x6)
   (x4) edge (x2)
   (x4) edge (x5)
   (x5) edge (x1)
   (x5) edge (x6)
   (x6) edge [out=145,in=45] (x4)
   (x6) edge [out=140,in=0] (x3)
	;
\draw (2, -1) node{$D_3$};
\end{tikzpicture}
\qquad \qquad \qquad \qquad
\begin{tikzpicture}[x=1.0cm, y=1.0cm]
   \vertex (x1) at (0,0) [label=above:$$]{};
   \vertex (x2) at (0,1.5) [label=above:$$]{};
   \vertex (x3) at (1.5,0) [label=above:$$]{};
   \vertex (x4) at (1.5,1.5) [label=above:$$]{};
   \vertex (x5) at (3,1.5) [label=above:$$]{};
   \vertex (x6) at (4.5,1.5) [label=above:$$]{};
   \path[->,thick]
   (x2) edge (x4)
   (x2) edge [out=20,in=135] (x5)
   (x1) edge (x3)
   (x1) edge [out=-25,in=-135] (x6)
   (x3) edge (x2)
   (x3) edge [out=10,in=-90] (x6)
   (x4) edge [out=-135,in=70] (x1)
   (x4) edge (x5)
   (x5) edge (x3)
   (x5) edge (x6)
   (x5) edge (x1)
   (x6) edge [out=135,in=45](x2)
   (x6) edge [out=155,in=45](x4)
	;
\draw (2, -1) node{$D_4$};
\end{tikzpicture}
\end{center}
\caption{ $D_1$, $D_2$, $D_3$, and $D_4$ are orientations of $ K_{3,3,1},K_{2,2,2}, K_{3,1,1,1}$, and $K_{2,2,1,1}$, respectively, whose $(1,2)$-step competition graphs are complete}
\label{fig:tournaments-complete}
\end{figure}

\section{$C_{1,2}(D)$ for a tight multipartite tournament $D$ with a maximum anti-$\{1,2\}$-competing set of size two}
\begin{Lem} \label{lem:diameter_two}
Let $D$ be a tight multipartite tournament.
Suppose that $D$ has no sinks and an anti-$\{1,2\}$-competing set $S=\{u_1,u_2\}$.
If $C_{1,2}(D)-\{u_1,u_2\}$ is non-complete,
then $u_1\stackrel{*}{\to} u_2$ or $u_2\stackrel{*}{\to} u_1$.
\end{Lem}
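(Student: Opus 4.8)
The plan is to assume $C_{1,2}(D)-\{u_1,u_2\}$ is non-complete and deduce $u_1\stackrel{*}{\to}u_2$ or $u_2\stackrel{*}{\to}u_1$. First I would observe that $u_1$ and $u_2$ must lie in \emph{distinct} partite sets: since $D$ is tight, each partite set is a clique in $C_{1,2}(D)$, so two vertices sharing a partite set are adjacent in $C_{1,2}(D)$; as $\{u_1,u_2\}$ is anti-$\{1,2\}$-competing we have $u_1\not\sim u_2$, so they cannot share a partite set. We may therefore assume $(u_1,u_2)\in A(D)$ (the case $(u_2,u_1)\in A(D)$ is symmetric and yields $u_2\stackrel{*}{\to}u_1$). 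Now $u_1,u_2$ are nonadjacent non-sink vertices in distinct partite sets with $(u_1,u_2)\in A(D)$, so Proposition~\ref{Prop:different-partite-adjacent} applies: either $u_1\stackrel{*}{\to}u_2$ (and we are done), or there is a partite set $X$ with $N^+(u_1)\cap N^+(u_2)=\emptyset$, $N^+(u_2)\subseteq X$, and $N^+(u_1)\subseteq X\cup\{u_2\}$. In the latter case I record that $u_2\notin X$ (else $N^+(u_2)=\emptyset$, contradicting that $u_2$ is not a sink) and $u_1\notin X$ (else $N^+(u_1)\subseteq\{u_2\}$, so $u_1\stackrel{*}{\to}u_2$, which we excluded); in particular $X$ differs from the partite sets of $u_1$ and of $u_2$. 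The goal is now to contradict the non-completeness of $C_{1,2}(D)-\{u_1,u_2\}$.

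The key step I would prove is:
\[
\text{every vertex } v\in V(D)\setminus\{u_1,u_2\} \text{ satisfies } v\to u_1 \ \text{ or }\ v\to u_2 .
\]
Suppose not, so $v\not\to u_1$ and $v\not\to u_2$ for some such $v$. For each $t\in\{u_1,u_2\}$, since $v\neq t$ and $v\not\to t$, either $v$ and $t$ lie in the same partite set or $t\to v$. It is impossible that $v$ lies in the partite set of both $u_1$ and $u_2$, since those partite sets are distinct. If $v$ lies in the partite set of $u_1$ and $u_2\to v$, then $v\in N^+(u_2)\subseteq X$, forcing $X$ to be the partite set of $u_1$, contradicting $u_1\notin X$; the symmetric possibility ($v$ in the partite set of $u_2$ and $u_1\to v$, hence $v\in N^+(u_1)\setminus\{u_2\}\subseteq X$) contradicts $u_2\notin X$. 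Finally, if $u_1\to v$ and $u_2\to v$, then $v\in N^+(u_1)\cap N^+(u_2)=\emptyset$, impossible. This establishes the claim.

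Then I would finish as follows. Pick $a,b\in V(D)\setminus\{u_1,u_2\}$ with $a\neq b$ that are not adjacent in $C_{1,2}(D)$ (they exist by non-completeness, as $C_{1,2}(D)-\{u_1,u_2\}$ is the induced subgraph). By the claim each of $a,b$ points to $u_1$ or to $u_2$. If both point to $u_1$, then $u_1\notin\{a,b\}$ is a common out-neighbor of $a$ and $b$, so $a\sim_1 b$, a contradiction; the same argument with $u_2$ rules out both pointing to $u_2$. Hence, after possibly interchanging $a$ and $b$, we have $a\to u_2$ and $b\to u_1$. Since $(u_1,u_2)\in A(D)$, the walk $b\to u_1\to u_2$ is a directed $(b,u_2)$-path of length $2$ not traversing $a$ (because $a\notin\{b,u_1,u_2\}$), while $(a,u_2)$ is an arc not traversing $b$; so $u_2$ is a $(1,2)$-step common out-neighbor of $a$ and $b$, giving $a\sim_{1,2}b$, a contradiction. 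Thus no such pair exists, i.e.\ $C_{1,2}(D)-\{u_1,u_2\}$ is complete — contradicting our assumption. Therefore $u_1\stackrel{*}{\to}u_2$ (or $u_2\stackrel{*}{\to}u_1$ in the symmetric case).

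I do not anticipate a deep obstacle, provided the two guiding ideas are spotted: (i) using tightness to rule out $u_1$ and $u_2$ sharing a partite set, and (ii) using the arc $u_1\to u_2$ itself to supply the length-$2$ path $b\to u_1\to u_2$. Omitting (i) would force one to also treat the configuration in which $N^+(u_1)$ and $N^+(u_2)$ are disjoint nonempty subsets of a common partite set, and that case is genuinely messier — one would want to form the directed $4$-cycle $u_1\to x_1\to u_2\to x_2\to u_1$ with $x_1\in N^+(u_1)$, $x_2\in N^+(u_2)$ and chase $(1,2)$-competitions through it — so the economy of the argument really rests on step one.
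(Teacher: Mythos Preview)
Your proof is correct and follows essentially the same route as the paper: use tightness to place $u_1,u_2$ in distinct partite sets, assume $u_1\to u_2$, invoke Proposition~\ref{Prop:different-partite-adjacent} to obtain the partite set $X$, show that every vertex outside $\{u_1,u_2\}$ has $u_1$ or $u_2$ as an out-neighbor, and then derive a contradiction from any nonadjacent pair. Your write-up is in fact a bit more explicit than the paper's in two places (the verification that $X$ differs from the partite sets of $u_1$ and $u_2$, and the final case $a\to u_2$, $b\to u_1$ using the path $b\to u_1\to u_2$), but the argument is the same.
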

\begin{proof}
Since $S$ is an anti-$\{1,2\}$-competing set and every partite set in $D$ is a $\{1,2\}$-competing set,
$u_1$ and $u_2$ belong to distinct partite sets.
Without loss of generality, we may assume that
$X_1$ and $X_2$ are partite sets of $D$ containing $u_1$ and $u_2$, respectively, and
\[u_1 \to u_2.\]
Suppose that $C_{1,2}(D)-\{u_1,u_2\}$ is non-complete. 
Then there exist two nonadjacent vertices $v_1$ and $v_2$ in $C_{1,2}(D)-\{u_1,u_2\}$.
 To show $u_1 \stackrel{*}{\to} u_2$, we suppose $u_1 \not \stackrel{*}{\to} u_2$.
Then $u_2$ is not the only out-neighbor of $u_1$.
Consequently, by Proposition~\ref{Prop:different-partite-adjacent}, there exists a partite set $X$ such that $\emptyset\neq N^+(u_2)\subseteq X$ and $\emptyset\neq N^+(u_1)\subseteq X\cup \{u_2\}$.
Then $X\neq X_1$ and $X\neq X_2$.
Thus
$ N^+(u_1) \cap X_2 =\{u_2\} $
and $N^+(u_2)\cap X_1 = \emptyset$.

Now we show that each of $v_1$ and $v_2$ has $u_1$ or $u_2$ as an out-neighbor.
If $v_1 \in X_1$, then $u_2$ is an out-neighbor of $v_1$ since $N^+(u_2)\cap X_1 = \emptyset$.
If $v_1 \in X_2$, then $u_1$ is an out-neighbor of $v_1$ since $N^+(u_1) \cap X_2 =\{u_2\}$.
If $v_1 \notin X_1 \cup X_2$,
then at least one of $u_1$ and $u_2$ is an out-neighbor of $v_1$ since $u_1$ and $u_2$ have no common out-neighbor.
Therefore $v_1$ has $u_1$ or $u_2$ as an out-neighbor.
By symmetry, $v_2$ has $u_1$ or $u_2$ as an out-neighbor.
Thus $v_1 \sim v_2$ in $C_{1,2}(D)-\{u_1,u_2\}$ and we reach a contradiction. Hence $u_1 \stackrel{*}{\to} u_2.$
\end{proof}

\begin{Lem} \label{lem:picky}
Let $D$ be a tight multipartite tournament.
Suppose that $D$ has no sinks and there are two vertices $u_1$ and $u_2$ such that $u_1\stackrel{*}{\to} u_2$.
Then for any anti-$\{1,2\}$-competing set $S$ of size $2$ with $S \cap \{u_1,u_2\}=\emptyset$,
\begin{enumerate}[{(1)}]
\item $v_1\stackrel{*}{\to} v_2$
where $v_1$ and $v_2$ are appropriate labels for the vertices of $S$;
\item $v_1$ and $u_1$ belong to the same partite set of $D$, and $v_2$ and $u_2$ belong to distinct partite sets of $D$.
\end{enumerate}
\end{Lem}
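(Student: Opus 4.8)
The plan is to squeeze everything out of the hypothesis $u_1\stackrel{*}{\to}u_2$, i.e.\ $N^+(u_1)=\{u_2\}$: if $X_1$ is the partite set of $D$ containing $u_1$, then every vertex of $V(D)\setminus(X_1\cup\{u_2\})$ is an in-neighbor of $u_1$. Since $D$ is tight and $S$ is anti-$\{1,2\}$-competing, the two vertices of $S$ lie in distinct partite sets. I would first observe that at least one of them lies in $X_1$: otherwise both vertices of $S$ are adjacent to $u_1$ in the underlying graph and different from $u_2$ (as $S\cap\{u_1,u_2\}=\emptyset$), hence both dominate $u_1$, so they have the common out-neighbor $u_1$ and compete, contradicting that $S$ is anti-$\{1,2\}$-competing. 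So I may label the vertices of $S$ as $v_1,v_2$ with $v_1\in X_1$ and $v_2\notin X_1$; this already gives the first half of (2) (that $v_1$ and $u_1$ lie in the same partite set), and since $v_2\notin X_1$ and $v_2\neq u_2$ it forces the arc $(v_2,u_1)$.

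Next I would prove (1), namely $N^+(v_1)=\{v_2\}$, by two short exclusions, both using the arc $(v_2,u_1)$ and both producing a $(1,2)$-step common out-neighbor of $v_1$ and $v_2$. First, if $v_1$ had an out-neighbor $z\notin\{u_2,v_2\}$, then $z\notin X_1$ and $z\neq u_2$ force $z\to u_1$, so $v_1\to z\to u_1$ is a directed $(v_1,u_1)$-path of length $2$ avoiding $v_2$; with the arc $(v_2,u_1)$ this makes $u_1$ (which is distinct from $v_1$ and $v_2$) a $(1,2)$-step common out-neighbor of $v_1$ and $v_2$ --- contradiction. Hence $N^+(v_1)\subseteq\{u_2,v_2\}$. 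Second, if $u_2\in N^+(v_1)$, then since $N^+(u_1)=\{u_2\}$ the walk $v_2\to u_1\to u_2$ is a directed $(v_2,u_2)$-path of length $2$ avoiding $v_1$, and with the arc $(v_1,u_2)$ it makes $u_2$ a $(1,2)$-step common out-neighbor of $v_1$ and $v_2$ --- again a contradiction. So $N^+(v_1)\subseteq\{v_2\}$, and since $D$ has no sinks, $N^+(v_1)=\{v_2\}$, that is, $v_1\stackrel{*}{\to}v_2$.

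It remains to prove the second half of (2): that $v_2$ and $u_2$ lie in distinct partite sets. Suppose not, and let $X_2$ be the common partite set of $v_2$ and $u_2$. Then $u_1$ and $v_1$ are two non-sink vertices of the same partite set $X_1$ with $N^+(u_1)\cup N^+(v_1)=\{u_2,v_2\}\subseteq X_2$ and, since $u_2\neq v_2$, with $N^+(u_1)\cap N^+(v_1)=\emptyset$; by Proposition~\ref{prop:same-partite} they neither compete nor $(1,2)$-compete, so $X_1$ is not $\{1,2\}$-competing, contradicting that $D$ is tight. Hence $v_2$ and $u_2$ lie in distinct partite sets, which finishes the proof.

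The routine-but-necessary care is in the bookkeeping of which vertex of $S$ plays which role, and in checking in each exclusion that the exhibited $(1,2)$-step common out-neighbor is genuinely distinct from both $v_1$ and $v_2$ --- this is exactly where $S\cap\{u_1,u_2\}=\emptyset$ and $u_2\neq v_2$ are used. The conceptual crux I expect is recognizing that, once $v_1\stackrel{*}{\to}v_2$ is known, the pair $\{v_1,v_2\}$ can never $\{1,2\}$-compete, so the contradiction for the same-partite-set case of (2) cannot be found inside $S$; instead it must be located between the auxiliary vertices $u_1$ and $v_1$, whose failure to $\{1,2\}$-compete is forbidden by tightness.
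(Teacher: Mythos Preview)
Your proof is correct and follows essentially the same route as the paper's: place one vertex of $S$ in $X_1$ via the common-out-neighbor-$u_1$ argument, then force $v_1\stackrel{*}{\to}v_2$ by exhibiting a $(1,2)$-step common out-neighbor whenever $v_1$ has another out-neighbor, and finish the second half of (2) using Proposition~\ref{prop:same-partite} together with tightness of $X_1$. Your treatment is in fact slightly more careful than the paper's, since you separate out the possibility that $v_1$'s extra out-neighbor is $u_2$ (the paper's step ``$v_1\to v'\to u_1$'' tacitly needs $v'\neq u_2$), and you handle that case cleanly with the path $v_2\to u_1\to u_2$.
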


\begin{proof}
Let $X_1$ and $X_2$ be partite sets of $D$ containing $u_1$ and $u_2$, respectively.
Then $N^-(u_1) = V(D) - (X_1 \cup \{u_2\})$ since $u_1\stackrel{*}{\to} u_2$.
Suppose that $D$ has an anti-$\{1,2\}$-competing set $S$ of size $2$ with $S \cap \{u_1,u_2\} = \emptyset$.
Let $S=\{v_1,v_2\}$.
If $S \cap X_1= \emptyset$, then $u_1$ is a common out-neighbor of $v_1$ and $v_2$, a contradiction.
Therefore $S \cap X_1 \neq \emptyset$.
Then, since every partite set of $D$ is $\{1,2\}$-competing, not both $v_1$ and $v_2$ belong to $X_1$.
Without loss of generality, we may assume $v_1 \in X_1$ and $v_2 \notin X_1$.
Then, $u_1$ and $v_1$ belong to the same partite set, so they $\{1,2\}$-compete.
Since $v_2 \neq u_2$, $u_1$ is an out-neighbor of $v_2$.
If $v_1$ has an out-neighbor $v'$ distinct from $v_2$, then $v_1 \to v' \to u_1$ and so $u_1$ is a $(1,2)$-step common out-neighbor of $v_1$ and $v_2$, a contradiction.
Therefore
\[v_1\stackrel{*}{\to} v_2\]
since $v_1$ is not a sink.
Then, since $u_1\stackrel{*}{\to} u_2$ and $u_2 \neq v_2$,
$u_1 \not \sim_1 v_1$ and so $u_1 \sim_{1,2} v_1$.
We also note that 
$N^+(u_1) \subseteq X_2$ and $N^+(v_1)\subseteq X$ where $X$ is the partite set containing $v_2$.
Then, by Proposition~\ref{prop:same-partite}(2),
$X \neq X_2$.
Therefore we have shown that the parts (1) and (2) are valid.
\end{proof}
The {\it complement }of a graph $G$ is a graph $\overline{G}$ on the same vertices such that two distinct vertices of $\overline{G}$ are adjacent if and only if they are not adjacent in $G$.
A tree containing exactly two vertices of degree at least two is called a {\it double-star}.
A {\it caterpillar} is a tree in which all the vertices are within distance $1$ of a central path.
\begin{Thm} \label{thm:two-stable-set-intersect}
Let $D$ be a tight multipartite tournament with a maximum anti-$\{1,2\}$-competing set of size two.
Then the complement of $C_{1,2}(D)$ is one of the following types:
\begin{enumerate}
\item[A.] a star graph with or without isolated vertices;
\item[B.] a double-star graph with or without isolated vertices;
\item[C.] a disjoint union of at least two star graphs with or without isolated vertices;
\item[D.] a caterpillar which has at least one vertex of degree $2$ with or without isolated vertices.
\end{enumerate}
\end{Thm}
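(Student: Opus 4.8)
The plan is to read off the structure of $G:=\overline{C_{1,2}(D)}$ entirely from Lemmas~\ref{lem:diameter_two} and~\ref{lem:picky} (together with Propositions~\ref{prop:same-partite} and~\ref{Prop:different-partite-adjacent}). Note first that, since a maximum anti-$\{1,2\}$-competing set has size two, $G$ has at least one edge and $G$ is triangle-free (a triangle of $G$ would be an anti-$\{1,2\}$-competing set of size three). I would dispose of two easy cases. If $D$ has a sink, then Lemma~\ref{lem:sink} gives $C_{1,2}(D)\cong K_{|V(D)|-1}$ together with an isolated vertex, so $G$ is a star (type A). If $D$ has no sink and no vertex of outdegree one, then for every edge $\{u_1,u_2\}$ of $G$ neither $u_1\stackrel{*}{\to}u_2$ nor $u_2\stackrel{*}{\to}u_1$, so the contrapositive of Lemma~\ref{lem:diameter_two} forces $C_{1,2}(D)-\{u_1,u_2\}$ to be complete; that is, every edge of $G$ is incident with $u_1$ or $u_2$. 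Thus any two edges of $G$ meet, and a triangle-free graph in which any two edges meet is a star, so $G$ is again of type A.

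From now on assume $D$ has no sink; fix a vertex $p$ with $p\stackrel{*}{\to}q$ and let $X_1$ be the partite set containing $p$, so $\{p,q\}\in E(G)$ by Proposition~\ref{prop:one-degree}. The engine of the proof is the following consequence of Lemmas~\ref{lem:diameter_two} and~\ref{lem:picky}: if $e=\{a,b\}$ is an edge of $G$ such that $G-\{a,b\}$ still has an edge, then after relabeling $a\stackrel{*}{\to}b$, and moreover every edge $\{c,d\}$ of $G$ disjoint from $\{a,b\}$ has, after relabeling, $c$ in the partite set of $a$, $d^{+}(c)=1$, and $c\stackrel{*}{\to}d$. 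In words: away from a single ``central'' edge, every edge of $G$ runs ``$\stackrel{*}{\to}$'' out of one fixed partite set, and since the out-neighbourhood of a vertex is disjoint from its own partite set, the possible edges are severely restricted. The first application is to prove $G$ is a forest: being triangle-free it remains to exclude an induced cycle of length at least $4$. Applying the engine to consecutive edges of an induced $4$- or $5$-cycle determines the partite sets and the forced arc directions of all cycle vertices, and then Proposition~\ref{prop:same-partite}(3) produces a chord, contradicting inducedness; for an induced cycle of length at least $6$, three consecutive applications of the engine force an arc joining two vertices of the same partite set, which is impossible.

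The second application shows each connected component of $G$ is a caterpillar: it suffices to check that no vertex $v$ has three non-leaf neighbours $a_1,a_2,a_3$. For each $i$, a pendant edge at some $a_j$ ($j\ne i$) avoids $\{v,a_i\}$ (using triangle-freeness), so the engine applies to $\{v,a_i\}$; since $v\stackrel{*}{\to}x$ can hold for at most one neighbour $x$ of $v$, at least two of the $a_i$ satisfy $a_i\stackrel{*}{\to}v$, and then taking one such edge as the central edge and a pendant edge at the other such $a_i$ again yields an arc inside a partite set, a contradiction. Hence, after deleting the leaves of a component, every remaining vertex has degree at most $2$; combined with the forest property this makes the component a caterpillar whose spine (what remains after deleting leaves) is a path. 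Now I assemble the types. If $G$ has at least two nontrivial components, picking an edge in one and applying the engine once to that component and once to another shows that \emph{every} edge of $G$ has the form $\{r,f(r)\}$ with $r$ in one fixed partite set, $d^{+}(r)=1$, and $f(r)$ the unique out-neighbour of $r$; such an $r$ has $f(r)$ as its only neighbour in $G$, so each nontrivial component is a star and $G$ is of type C. If $G$ is connected it is a caterpillar; if its spine has at most one vertex $G$ is a star (type A), if the spine has exactly two vertices they are adjacent and $G$ is a double-star (type B), and if the spine has at least three vertices then some internal spine vertex $w$ must have degree $2$: otherwise every spine vertex has degree at least $3$, at least two neighbours $x$ of $w$ satisfy $x\stackrel{*}{\to}w$, these two cannot be the (non-leaf) spine neighbours of $w$ without creating an arc inside a partite set, so both spine neighbours $x$ satisfy $w\stackrel{*}{\to}x$ — impossible for two distinct $x$. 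Thus $G$ is of type D. This exhausts all cases.

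The delicate part is the forest step, in particular the induced $4$- and $5$-cycle cases: before the contradictory chord appears one must carefully pin down, via Propositions~\ref{prop:same-partite} and~\ref{Prop:different-partite-adjacent}, the forced arc directions and partite-set memberships of all four or five cycle vertices. The remainder reuses a single device — an arc cannot join two vertices of the same partite set — but the honest work lies in the bookkeeping: enumerating the small configurations of edges meeting the central edge $\{p,q\}$, and checking each time that ``$G$ minus the two endpoints of the central edge still has an edge'', which is exactly what licenses the engine.
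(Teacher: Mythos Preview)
Your approach is sound and genuinely different from the paper's. The paper fixes an anti-$\{1,2\}$-competing pair $\{u_1,u_2\}$, then branches on whether $C_{1,2}(D)-\{u_1,u_2\}$ is complete; if not, it introduces a second disjoint pair $\{v_1,v_2\}$, splits on whether $u_2\sim v_2$, and inside each branch does a careful vertex-by-vertex analysis (tracking the maximum number $t$ of disjoint anti-competing pairs and checking each candidate edge of $\overline{G}$ against $\{u_1,u_2,v_1,v_2\}$). You instead extract a single structural ``engine'' from Lemmas~\ref{lem:diameter_two} and~\ref{lem:picky} and use it to prove global statements: $\overline{G}$ is triangle-free, then a forest, then a disjoint union of caterpillars, and finally you read off the four types from the spine length and the number of nontrivial components. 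Your route is more conceptual and reusable, at the cost of the cycle-exclusion bookkeeping you flag; the paper's route avoids that bookkeeping but pays for it with a longer case tree.

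One small slip to fix in your type-D step. From ``at least two neighbours $x$ of $w$ satisfy $x\stackrel{*}{\to} w$'' and ``these two cannot both be spine neighbours,'' you conclude ``so both spine neighbours $x$ satisfy $w\stackrel{*}{\to} x$''; that inference is invalid (one spine neighbour and one leaf could be the two vertices with $x\stackrel{*}{\to} w$). The repair is immediate and uses the same device you already have: whichever two neighbours $x,y$ satisfy $x\stackrel{*}{\to} w$ and $y\stackrel{*}{\to} w$ must lie in the same partite set $X$ (otherwise the arc between them contradicts outdegree one), and at least one of them---say $x$---is a spine vertex, hence has a neighbour $z\neq w$; applying the engine to the central edge $\{y,w\}$ and the disjoint edge $\{x,z\}$ forces $z\stackrel{*}{\to} x$ with $z\in X$, an arc inside $X$. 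This yields the contradiction in all cases and shows every internal spine vertex has degree~$2$, which is what you need.
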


\begin{proof}
We denote $C_{1,2}(D)$ by $G$.
If $D$ has a sink, then $G$ is isomorphic to $K_{|V(D)|-1}$ with an isolated vertex by Lemma~\ref{lem:sink} and so the complement $\overline{G}$ of $G$ is of Type A.

We assume that $D$ has no sinks.
Let $t$ be the maximum number of disjoint anti-$\{1,2\}$-competing sets with size $2$ of $D$ and let
$\mathcal{S}= \{S_1,S_2,\ldots,S_t\}$
where $S_i$ is an anti-$\{1,2\}$-competing set with size $2$ of $D$ for each $1\leq i \leq t $ and $S_1,\ldots,S_t$ are mutually disjoint.
In addition, let \[S_1=\{u_1,u_2\}\] and $X_1$ and $X_2$ be the partite sets of $D$ containing $u_1$ and $u_2$, respectively.
Without loss of generality, we may assume
\[u_1\to u_2.\]
By the way, since $\{u_1,u_2\}$ is a maximum anti-$\{1,2\}$-competing set of size two by the hypothesis,
each vertex in $G-\{u_1,u_2\}$ is adjacent to $u_1$ or $u_2$.
Thus $\overline{G}$ is of Type A or Type B if $G-\{u_1,u_2\}$ is a complete graph.

Now we suppose that
$G-\{u_1,u_2\}$ is not a complete graph.
Then, by Lemma~\ref{lem:diameter_two},
\[u_1 \stackrel{*}{\to} u_2.\]
Now, we let \[S_2=\{v_1,v_2\}.\]
Then $\{v_1,v_2\} \cap \{u_1,u_2\}=\emptyset$.

For simplicity, we say that an anti-$\{1,2\}$-competing set $\{x_1,x_2\}$ with $x_1\stackrel{*}{\to} x_2$ is {\it picky}.
Then $\{u_1,u_2\}$ is picky.
Therefore
$\{v_1,v_2\}$ is also picky by Lemma~\ref{lem:picky}(1).
Without loss of generality, we may assume \[v_1\stackrel{*}{\to} v_2.\]
Then $v_1 \in X_1$ and $v_2 \notin X_2$
by Lemma~\ref{lem:picky}(2).
We may assume $v_2 \in X_3$ where $X_3$ is a partite set of $D$.
Then, since $u_1\stackrel{*}{\to} u_2$ and $v_1\stackrel{*}{\to} v_2$,
any pair of vertices in $V(D)- X_1$ distinct from $\{u_2,v_2\}$ has a common out-neighbor $u_1$ or $v_1$.
Moreover, since every partite set of $D$ is $\{1,2\}$-competing, $X_1$ forms a clique in $G$.
Therefore
\statement{sta:thm:two-stable-set-intersect_1}{any anti-$\{1,2\}$-competing set of size $2$ distinct from $\{u_2,v_2\}$ intersects with both $X_1$ and $V(D)- X_1$.}
{\it Case 1}.
 $u_2 \sim v_2$.
Then, by \eqref{sta:thm:two-stable-set-intersect_1},
\statement{sta:thm:two-stable-set-intersect_2}{any anti-$\{1,2\}$-competing set of size $2$ intersects with both $X_1$ and $V(D)- X_1$.}

{\it Subcase 1}. $t \geq 3$ (recall that $t$ is the maximum number of disjoint anti-$\{1,2\}$-competing sets with size two of $D$).
Then $S_3$ is picky by
Lemma~\ref{lem:picky}(1).
We denote $S_3$ by $\{s_{3,1},s_{3,2}\}$
with $s_{3,1}\stackrel{*}{\to} s_{3,2}$.
Since $S_1$ is picky and $S_1\cap S_3= \emptyset$, $s_{3,1} \in X_1$ and $s_{3,2} \notin X_2$ by Lemma~\ref{lem:picky}(2).
Since $S_2$ is also picky and $S_2 \cap S_3= \emptyset$,
 $s_{3,2} \notin X_3$ by the same lemma.
   Therefore we may assume $s_{3,2}\in X_4$ where $X_4$ is a partite set of $D$.
 Inductively, we may let $S_i=\{s_{i,1},s_{i,2}\}$ so that

 \begin{equation}\label{eq:thm:two-stable-set-intersect_1}
 s_{i,1}\stackrel{*}{\to} s_{i,2}, \quad s_{i,1}\in X_1, \quad \text{and}\quad s_{i,2} \in X_{i+1}
 \end{equation}
 where $X_{i+1}$ is a partite set of $D$ for each $1\leq i\leq t$.
 
 If every anti-$\{1,2\}$-competing set of $D$ belongs to $\mathcal S$, then
 $\overline{G}$ is of Type C.
 Now suppose that
 there exists an anti-$\{1,2\}$-competing set $\{y_1,y_2\}$ of $D$ with $\{y_1,y_2\} \notin \mathcal{S}$.
Then, by the maximality of $\mathcal{S}$,
there exists a pair $\{s_{j,1},s_{j,2}\}$ in $\mathcal{S}$ such that $\{s_{j,1},s_{j,2} \} \cap \{y_1,y_2\} \neq \emptyset$ for some $j \in \{1,\ldots,t\}$.
Without loss of generality, we may assume $y_1 \in X_1$ and $y_2 \in V(D) - X_1$ by \eqref{sta:thm:two-stable-set-intersect_2}.
We first suppose $\{s_{j,1},s_{j,2} \} \cap \{y_1,y_2\}=\{s_{j,1}\}$, that is, $y_1=s_{j,1}$ and $y_2 \neq s_{j,2}$.
Then $y_2$ has at least $t-1$ out-neighbors in $T=\{s_{1,1},s_{2,1},\ldots,s_{t,1}\}$ by \eqref{eq:thm:two-stable-set-intersect_1}.
In addition, for each $1 \leq i \leq t$ except $i=j$, there exists a directed path $P_i:=s_{j,1} \to s_{j,2}\to s_{i,1}$ which does not traverse $y_2$.
Therefore $
s_{1,1},s_{2,1},\ldots,s_{t,1}$ except $s_{j,1}$ are $2$-step out-neighbors of $s_{j,1}$ obtained by $P_1,\ldots,P_t$.
Since $|N^+(y_2)\cap T|\geq t-1$,
$|N^+(y_2)\cap (T-\{s_{j,1}\})|\geq t-2$.
Then, since $t\geq 3$, $|N^+(y_2)\cap (T-\{s_{j,1}\})|\geq 1$.
Therefore
there exists a $(1,2)$-step common out-neighbor of $s_{j,1}$ and $y_2$ in $T-\{s_{j,1}\}$, a contradiction for $\{y_1,y_2\}$ being an anti-$\{1,2\}$-competing set.
Thus $\{s_{j,1},s_{j,2} \} \cap \{y_1,y_2\}=\{s_{j,2}\}$, that is, $y_1 \neq s_{j,1} $ and $y_2 = s_{j,2}$.

We will claim that $s_{j,2}$ is the only vertex in $\{s_{1,2},s_{2,2},\ldots,s_{t,2}\}$ which is not adjacent to $y_1$ in $G$.
Since $t \geq 3$,
there exist two vertices $s_{j_1,1}$ and $s_{j_2,1}$ for some $j_1,j_2 \in \{1,\ldots,t\}- \{j\}$ and, by~\eqref{eq:thm:two-stable-set-intersect_1},
every vertex in $V(D)-X_1$ except $s_{j_1,2}$ (resp.\ $s_{j_2,2}$) is an in-neighbor of $s_{j_1,1}$ (resp.\ $s_{j_2,1}$).
Thus $s_{j_1,1}$ and $s_{j_2,1}$
are out-neighbors of $s_{j,2}$ and so
$s_{j,2}$ has an out-neighbor distinct from $y_1$ in $X_1$.
Since $D$ has no sinks,
$y_1$ has an out-neighbor.
To show $y_1 \stackrel{*}{\to} s_{j,2}$,
suppose $y_1$ has an out-neighbor $y'$ distinct from $s_{j,2}$.
Then $y' \notin X_1$.
Let $s'$ be an out-neighbor of $s_{j,2}$ distinct from $y_1$ in $X_1$.
Then $y' \to s'$ or $s' \to y'$. 
Therefore $s'$ is a $(1,2)$-step common out-neighbor of $y_1$ and $s_{j,2}$ and so $y_1 \sim s_{j,2}$, which is impossible.
Hence $y_1 \stackrel{*}{\to} s_{j,2}$.
Since $s_{j,1}\stackrel{*}{\to} s_{j,2}$ by~\eqref{eq:thm:two-stable-set-intersect_1},
 $s_{j,1}$ is a common out-neighbor of $s_{1,2},s_{2,2},s_{3,2},\ldots,s_{t,2}$ except $s_{j,2}$.
 Then, since $y_1 \neq s_{j,1}$,
$s_{j,2}$ is a $(1,2)$-step common out-neighbor of $y_1$ and $s_{i,2}$ for each $1\leq i \leq t$ except $i=j$.
Thus $s_{j,2}$ is the only vertex in $\{s_{1,2},s_{2,2},\ldots,s_{t,2}\}$ which is not adjacent to $y_1$ in $G$.
Hence we may conclude that, for every edge $e$ except the edges $s_{1,1}s_{1,2},\ldots,s_{t,1}s_{t,2}$ in $\overline{G}$,
exactly one vertex in $\{
s_{1,2},s_{2,2},\ldots,s_{t,2}\}$ is incident to $e$ in $\overline{G}$.
This implies that
$\overline{G}$ is a disjoint union of $t$ star graphs whose centers are $s_{1,2},s_{2,2},\ldots,s_{t,2}$ with or without isolated vertices and so is of Type C.

{\it Subcase 2.} $t=2$.
If $\{u_1,u_2\}$ and $\{v_1,v_2\}$ are the only anti-$\{1,2\}$-competing sets in $D$,
then $\overline{G}$ is of Type C.
Suppose that there exists an anti-$\{1,2\}$-competing set $\{w_1,w_2\}$ in $D$ distinct from $\{u_1,u_2\}$ and $\{v_1,v_2\}$.
Then, since $t=2$, $\{w_1,w_2\} \cap \{u_1,u_2,v_1,v_2\} \neq \emptyset$.
Without loss of generality, we may assume $w_1 \in X_1$ and $w_2 \in V(D) - X_1$ by \eqref{sta:thm:two-stable-set-intersect_2}.
We first suppose $\{w_1,w_2\} \subset \{u_1,u_2,v_1,v_2\}$.
Then, either $w_1=u_1$ and $w_2=v_2$ or $w_1=v_1$ and $w_2=u_2$.
We consider the case where $w_1=u_1$ and $w_2=v_2$.
Then $u_1$ and $v_2$ are not adjacent.
If $v_2 \to u_2$, then $u_2$ is a common out-neighbor of $u_1$ and $v_2$, a contradiction.
Then, since $v_2$ and $u_2$ belong to distinct partite sets in $D$, $u_2 \to v_2$.
Thus $v_2$ is a common out-neighbor of $v_1$ and $u_2$ and so $v_1$ and $u_2$ are adjacent.
In case $w_1=v_1$ and $w_2=u_2$, we may conclude that $u_1$ and $v_2$ are adjacent by a similar argument.
Hence we have shown that
\statement{sta:thm:two-stable-set-intersect_3}{if $\{w_1,w_2\} \subset \{u_1,u_2,v_1,v_2\}$, then
exactly one of $\{u_1,v_2\}$ and $\{u_2,v_1\}$ is an anti-$\{1,2\}$-competing set in $D$.}

Now we suppose $\{w_1,w_2\} \not \subset \{u_1,u_2,v_1,v_2\}$.
Then  $|\{w_1,w_2\} \cap \{u_1,u_2,v_1,v_2\} |=1$ since $t=2$.
To the contrary, suppose $\{w_1,w_2\} \cap \{u_1,u_2,v_1,v_2\} =\{w_1\}$.
Then, since $w_1\in X_1$, $w_1=u_1$ or $w_1=v_1$.
Without loss of generality, we may assume $w_1=u_1$.
We note that $u_1 \stackrel{*}{\to} u_2$ and $v_1 \stackrel{*}{\to} v_2$.
Then $v_1$ is an out-neighbor of $u_2$.
Since $w_2 \in V(D)- X_1$ and $w_2 \neq v_2$, $v_1$ is an out-neighbor of $w_2$.
Then, since $w_2 \neq u_2$, $v_1$ is a $(1,2)$-step common out-neighbor of $u_1$ and $w_2$, a contradiction.
Therefore \[\{w_1,w_2\} \cap \{u_1,u_2,v_1,v_2\} =\{w_2\}. \]
Then
\[w_2=u_2 \quad \text{or}\quad w_2=v_2.\]
In addition,  \[w_1 \neq u_1 \quad \text{and} \quad w_1 \neq v_1.\]
Thus $v_1\in N^+(u_2)- \{w_1\}$ and $u_1 \in N^+(v_2)- \{w_1\}$ (recall that $v_1 \in N^+(u_2)$ and $u_1 \in N^+(v_2)$).
Since $w_1$ is a non-sink vertex, $N^+(w_1)\neq \emptyset$.
Then, since $u_2$ and $v_2$ are distinct,
$N^+(w_1) - \{u_2\}\neq \emptyset$ or $N^+(w_1) - \{v_2\}\neq \emptyset$.
Take a vertex $w' $ in $(N^+(w_1) - \{u_2\}) \cup (N^+(w_1) - \{v_2\})$.
Since $w_1 \in X_1$, $w' \in V(D)-X_1$.
If $w' \neq v_2$ (resp.\ $w' \neq u_2$), then $w' \to v_1$ (resp.\ $w' \to u_1$) and so $v_1$ (resp.\ $u_1$) is a $(1,2)$-step common out-neighbor of $u_2$ (resp.\ $v_2$) and $w_1$. 
Therefore $w_1 \sim u_2$ or $w_1 \sim v_2$.
Then, since $w_2=u_2$ or $w_2=v_2$,
exactly one of $\{w_1,u_2\}$ and $\{w_1,v_2\}$ is an anti-$\{1,2\}$-competing set in $D$.
Hence we have shown that
\statement{sta:thm:two-stable-set-intersect_4}
{if $\{w_1,w_2\} \not \subset \{u_1,u_2,v_1,v_2\}$, then (1) $\{w_1,w_2\} \cap \{u_1,u_2,v_1,v_2\} =\{w_2\}\subsetneq \{u_2,v_2\}$ and (2) exactly one of $\{w_1,u_2\}$ and $\{w_1,v_2\}$ is an anti-$\{1,2\}$-competing set in $D$.
}
Noting that $\{w_1,w_2\}$ was arbitrarily chosen, we may argue as follows.
Suppose that not both end points of $e$ are contained in $\{u_1,u_2,v_1,v_2\}$ for any edge $e$ except $u_1u_2$ and $v_1v_2$ in $\overline{G}$.
This case corresponds to \eqref{sta:thm:two-stable-set-intersect_4}.
Now fix an edge $e$ distinct from  $u_1u_2$ and $v_1v_2$.
Then, by \eqref{sta:thm:two-stable-set-intersect_4}(1), exactly one end point $u$ of $e$ belong to $\{u_1,u_2,v_1,v_2\}$ and $u=u_2$ or $u=v_2$.
 Let $u'$ be the other end point of $e$.
 If $u'$ is adjacent to $u_1$ or $v_1$, then edge $u'u_1$ or edge $u'v_1$ violates \eqref{sta:thm:two-stable-set-intersect_4}(1).
 Therefore $u'$ is adjacent to neither $u_1$ nor $v_1$.
 In addition, by \eqref{sta:thm:two-stable-set-intersect_4}(2), if $u=u_2$ (resp.\ $u=v_2$), $u'$ cannot be adjacent to $v_2$ (resp.\ $u_2$).
Thus $\overline{G}$ is of Type C.
Now suppose that there exists an edge that is not $u_1u_2$ or $v_1v_2$ and whose end points are contained in $ \{u_1,u_2,v_1,v_2\}$.
Then the subgraph of $\overline{G}$ induced by $\{u_1,u_2,v_1,v_2\}$ is an induced path of length $3$ by \eqref{sta:thm:two-stable-set-intersect_3}.
If there exists an edge $e$ except $u_1u_2$ and $v_1v_2$ in $\overline{G}$ such that not both end points of $e$ are contained in $\{u_1,u_2,v_1,v_2\}$, then, by \eqref{sta:thm:two-stable-set-intersect_4}, (1) exactly one end point $u$ of $e$ must be either $u_2$ or $v_2$, (2) the other end point $u'$ of $e$ cannot belong to $\{u_1,u_2,v_1,v_2\}$, and (3) $u$ is the only vertex in $\{u_1,u_2,v_1,v_2\}$ adjacent to $u'$.
Thus
 $u_1$ or $v_1$ is a vertex of degree $2$ in $\overline{G}$.
Hence $\overline{G}$ is of Type D.

{\it Case 2}. $u_2 \not \sim v_2$.
Suppose, to the contrary, that there exists an anti-$\{1,2\}$-competing set $\{w_1,w_2\}$ in $V(D) - \{u_2,v_2\}$.
Then, by \eqref{sta:thm:two-stable-set-intersect_1},
we may assume $w_1 \in X_1$ and $ w_2\in V(D)- X_1$.
Since $w_2 \neq v_2$, $v_1$ is an out-neighbor of $w_2$ in $D$.
If $w_1 = u_1$, then $v_1$ is a $(1,2)$-step common out-neighbor of $w_1$ and $w_2$ since $u_1 \to u_2 \to v_1$ is a directed path in $D$, a contradiction.
Therefore $w_1 \neq u_1$.
Then, since $w_2 \neq u_2$, $\{w_1,w_2\}$ is an anti-$\{1,2\}$-competing set in $V(D)- \{u_1,u_2\}$, so, by Lemma~\ref{lem:picky}, $w_1\stackrel{*}{\to} w_2$.
Therefore $w_1$ is a common out-neighbor of $u_2$ and $v_2$ in $D$, which is a contradiction to the case assumption.
Thus there exists no anti-$\{1,2\}$-competing set of size $2$ in $V(D)- \{u_2,v_2\}$, that is, $G-\{u_2,v_2\}$ is complete.
Thus $\overline{G}$ is of Type A or Type B.
Then, since $u_2 \not \sim v_2$ by the case assumption,
$u_2$ and $v_2$ are adjacent in $\overline{G}$ and so they have degree at least two (recall that $u_1$ is adjacent to $u_2$ and $v_1$ is adjacent to $v_2$ in $\overline{G}$). 
Since $D$ has a maximum anti-$\{1,2\}$-competing set of size two, each vertex in $V(G)-\{u_2,v_2\}$ $\{1,2\}$-competes with $u_2$ or $v_2$ in $D$.
Therefore $\overline{G}$ is of Type B.
\end{proof}

\section{$C_{1,2}(D)$ for a tight multipartite tournament $D$ with a maximum anti-$\{1,2\}$-competing set of size three}
In this section,
we characterize the $(1,2)$-step competition graph of a tight multipartite tournament with a maximum anti-$\{1,2\}$-competing set of size three.

\begin{Lem} \label{lem:cycle-adjacent}
Let $D$ be a digraph having a directed cycle $C$ of order $l$ for some $l \in \{3,4\}$ and $X$ be a subset of $V(D)$.
Suppose that each vertex $u$ in $X - V(C)$ has two out-neighbors $u_1$ and $u_2$ on $C$ such that both $(u_1,u_2)$-section and $(u_2,u_1)$-section of $C$ have length at most $2$.
Then $(X-V(C)) \sim X$. 
\end{Lem}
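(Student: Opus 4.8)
The plan is to fix a vertex $u \in X - V(C)$ together with its two (distinct) out-neighbours $u_1, u_2 \in V(C)$, fix an arbitrary $y \in X - \{u\}$, and show $u \sim y$ by producing either a common out-neighbour of $u$ and $y$ (so $u \sim_1 y$) or a $(1,2)$-step common out-neighbour (so $u \sim_{1,2} y$). I would lean throughout on two elementary facts. First, $u \notin V(C)$, so every vertex of $C$ — in particular $u_1$, $u_2$, and any vertex used as the interior point of a length-$2$ subpath of $C$ — is automatically distinct from $u$; this is exactly what makes the ``not traversing $u$'' (or ``not traversing $y$'') clauses in the definition of $(1,2)$-competition hold for free. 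Second, since $l \in \{3,4\}$, the hypothesis that both sections of $C$ between $u_1$ and $u_2$ have length at most $2$ forces, for $l = 3$, that one section has length $1$ and the other length $2$, and, for $l = 4$, that both have length exactly $2$, so $u_1$ and $u_2$ are non-consecutive on $C$. Write $z^{+}$ for the successor of a vertex $z$ on $C$.

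First I would treat the case $y \in V(C)$ (note $y \ne u$ then holds automatically). If $y \notin \{u_1, u_2\}$, I would argue that $y^{+} \in \{u_1, u_2\}$: for $l = 3$ this is immediate since $V(C) = \{u_1, u_2, y\}$, and for $l = 4$ it holds because $y$ is then an interior vertex of one of the two length-$2$ sections joining $u_1$ and $u_2$. Then $y^{+} \in N^{+}(u) \cap N^{+}(y)$, and since $y^{+} \in V(C)$ it is distinct from $u$ and $y$, so $u \sim_1 y$. If instead $y \in \{u_1, u_2\}$, say $y = u_1$ (the other case is symmetric), I would look at the $(u_1,u_2)$-section of $C$: if it has length $1$ then $u_2$ is a common out-neighbour of $u_1$ and $u$, distinct from both, giving $u \sim_1 y$; if it has length $2$, say $u_1 \to z \to u_2$, then the arc $u \to u_2$ and the path $u_1 \to z \to u_2$ (which avoids $u$) make $u_2$ a $(1,2)$-step common out-neighbour of $u$ and $u_1$, giving $u \sim_{1,2} y$.

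Then I would handle the case $y \in X - V(C)$, where by hypothesis $y$ too has two out-neighbours $y_1, y_2 \in V(C)$ with the analogous length condition. If $\{u_1,u_2\} \cap \{y_1,y_2\} \ne \emptyset$, any common element is a common out-neighbour of $u$ and $y$ on $C$, so $u \sim_1 y$. Otherwise $u_1, u_2, y_1, y_2$ are four distinct vertices of $C$, forcing $l = 4$ and $V(C) = \{u_1,u_2,y_1,y_2\}$; since $u_1$ and $u_2$ are non-consecutive, $u_1^{+} \in \{y_1,y_2\}$, say $u_1^{+} = y_1$ after relabelling. Then $u \to u_1 \to y_1$ is a directed path of length $2$ not traversing $y$, and $y \to y_1$ is an arc, so $y_1$ is a $(1,2)$-step common out-neighbour of $u$ and $y$, distinct from both, whence $u \sim_{1,2} y$. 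As $u$ and $y$ were arbitrary, this yields $(X - V(C)) \sim X$.

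I do not expect a real obstacle here — the proof is a short case analysis. The only two spots needing care are the claims $y^{+} \in \{u_1, u_2\}$ and $u_1^{+} \in \{y_1, y_2\}$, which is precisely where the restriction $l \le 4$ and the ``length at most $2$'' hypothesis are used, and the routine checks that each exhibited out-neighbour is distinct from the two vertices in question — all of which follow from $u, y \notin V(C)$ together with $|V(C)| = l \ge 3$.
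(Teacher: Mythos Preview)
Your proof is correct and follows essentially the same approach as the paper's: both split into the cases $y\in V(C)$ and $y\in X-V(C)$, use the length constraints to locate a vertex of $C$ that serves as a common or $(1,2)$-step common out-neighbour, and handle the disjoint case for $l=4$ by observing that the two pairs of out-neighbours interlace on the $4$-cycle. The only cosmetic differences are the order in which the subcases for $y\in V(C)$ are treated and the particular witness chosen in the final $l=4$ subcase (you use $u_1^{+}$, the paper uses $u_2$).
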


\begin{proof}
We take a vertex $u$ in $X- V(C)$ and a vertex $v$ in $X$. First we assume $v \in V(C)$. 
Let $u_1$ and $u_2$ be out-neighbors of $u$ on $C$ satisfying the given condition.
Without loss of generality, we may assume that the $(u_1,u_2)$-section of $C$ has length $2$.
Then $u_2$ is a $(1,2)$-step out-neighbor of $u_1$ and $u$.
If $l=3$, then $u_1$ is a common out-neighbor of $u_2$ and $u$.
If $l=4$, then $u_1$ is a $(1,2)$-step common out-neighbor of $u$ and $u_2$.
Any vertex on $C$ other than $u_2$ and $u_1$ shares $u_2$ or $u_1$ as a common out-neighbor with $u$.
Therefore $(X-V(C)) \sim (X\cap V(C))$.

Now we assume $v \in X-V(C)$.
If $u$ and $v$ do not share a common out-neighbor on $C$,
then $l=4$ and $u_2$ is a $(1,2)$-step common out-neighbor of $u$ and $v$.
Therefore $(X-V(C)) \sim (X- V(C))$.
Thus we have shown $(X-V(C)) \sim X$.
\end{proof}

\begin{Thm}\label{Thm:maximum-I=3}
Let $D$ be a tight multipartite tournament of order $n$ with a maximum anti-$\{1,2\}$-competing set $S$ of size three.
Then one of the following is true:
\begin{enumerate}[{(a)}]
\item $C_{1,2}(D) \cong K_{n}-E(K_3)$;
\item $n \geq 4$ and $C_{1,2}(D) \cong K_n - (E(K_3) \cup E(K_{1,l}))$ for a positive integer $l \leq n-3$ where the center of $K_{1,l}$ is a vertex $v$ such that $V(K_3) \cap V(K_{1,l})  =\{v\}$.
\end{enumerate}
\end{Thm}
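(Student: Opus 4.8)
The plan is to identify $\overline{C_{1,2}(D)}$ by pinning down every one of its edges. First I would dispose of two easy points. If $D$ had a sink, Lemma~\ref{lem:sink} would give $C_{1,2}(D)\cong K_{n-1}$ with an isolated vertex, whose maximum stable set has size $2$, contradicting $|S|=3$; so $D$ has no sink. Write $S=\{a_1,a_2,a_3\}$ and let $X_i$ be the partite set containing $a_i$; since partite sets are $\{1,2\}$-competing while $S$ is not, $X_1,X_2,X_3$ are distinct. The tournament induced on $S$ is not transitive — a member of in-degree $2$ inside $S$ would be a common out-neighbour of the other two — so we may take $a_1\to a_2\to a_3\to a_1$, and from now on I read the subscripts of the $a_i$ and $X_i$ modulo $3$.

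Next I would classify $V(D)\setminus S$. No vertex $v\notin S$ misses all of $S$ as out-neighbours: it would then receive an arc from each of the $\ge2$ members of $S$ lying outside its partite set and hence be a common out-neighbour of two of them. So let $Y$ collect the vertices of $V(D)\setminus S$ with at least two out-neighbours in $S$, let $Z_i$ collect those whose only out-neighbour in $S$ is $a_i$, and put $Z=Z_1\cup Z_2\cup Z_3$. The structural fact I would establish is $Z_i\subseteq X_{i+1}$: a $v\in Z_i$ is not in $X_i$; $v$ cannot lie outside $X_{i-1}\cup X_{i+1}$ (it would be a common out-neighbour of $a_{i-1}$ and $a_{i+1}$); and $v\notin X_{i-1}$, since otherwise $a_{i+1}\to v$ (as $v\not\to a_{i+1}$), and then $a_{i+1}\to v\to a_i$ together with $a_{i-1}\to a_i$ would make $a_{i-1}$ and $a_{i+1}$ $(1,2)$-compete, contrary to $S$ being anti-$\{1,2\}$-competing.

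Now I would locate every non-edge of $G:=C_{1,2}(D)$. Lemma~\ref{lem:cycle-adjacent}, applied with $C$ the triangle $a_1a_2a_3$ and $X=S\cup Y$ (on a triangle any two vertices cut $C$ into arcs of lengths $1$ and $2$), gives $Y\sim S\cup Y$; and for $u\in Y$, $v\in Z_i$, either $u\to a_i$ (a common out-neighbour of $u,v$) or $u$ hits both other members of $S$, so $u\to a_{i-1}\to a_i$ and $v\to a_i$ exhibit $a_i$ as a $(1,2)$-step common out-neighbour — hence $Y\sim V(D)$. For $v\in Z_i$: $v\to a_i$ with $a_{i-1}\to a_i$ gives $v\sim a_{i-1}$, and $v\to a_i$ with the length-$2$ path $a_{i+1}\to a_{i+2}\to a_i$ gives $v\sim a_{i+1}$, so $Z_i\sim S\setminus\{a_i\}$; also $Z$ is a clique, since two vertices of one $Z_i$ share $a_i$, while for $v\in Z_i$, $v'\in Z_j$ with $i\ne j$ and, say, $a_i\to a_j$, the path $v\to a_i\to a_j$ and the arc $v'\to a_j$ give $a_j$ as a $(1,2)$-step common out-neighbour. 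Thus the only possible edges of $\overline G$ are the three inside $S$ and the edges $a_iv$ with $v\in L_i:=\{v\in Z_i:v\not\sim a_i\}$. It then remains to show at most one $L_i$ is non-empty. For $v\in L_i$, Proposition~\ref{Prop:different-partite-adjacent} (applied to $v\to a_i$, which joins distinct partite sets) gives $v\stackrel{*}{\to}a_i$, or else $N^+(a_i)\subseteq X$ and $N^+(v)\subseteq X\cup\{a_i\}$ for a partite set $X$; in the latter case $a_{i+1}\in N^+(a_i)$ forces $X=X_{i+1}$, and then $v\in Z_i\subseteq X_{i+1}=X$ gives $N^+(v)\subseteq\{a_i\}$, i.e.\ again $v\stackrel{*}{\to}a_i$. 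So $N^+(v)=\{a_i\}$ for every $v\in L_i$. If $L_i$ and $L_j$ were both non-empty with $i\neq j$, pick $v\in L_i\subseteq X_{i+1}$ and $v'\in L_j\subseteq X_{j+1}$; then $X_{i+1}\neq X_{j+1}$ forces an arc between $v$ and $v'$, yet $N^+(v)=\{a_i\}$, $N^+(v')=\{a_j\}$ and $a_i,a_j\notin\{v,v'\}$ rule out both orientations — a contradiction. Hence $\overline G$ is a triangle on $S$ together with at most one star centred at some $a_i$ with leaf set $L_i\subseteq V(D)\setminus S$ and some isolated vertices, which is case (a) when every $L_i=\emptyset$ and case (b) otherwise (with $n\ge4$ and $|L_i|\le n-3$ forced).

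I expect the main obstacle to be the middle part: proving the structural fact $Z_i\subseteq X_{i+1}$ and then that $v\not\sim a_i$ forces $v\stackrel{*}{\to}a_i$ for $v\in Z_i$, since these two facts are exactly what confine every non-edge outside the triangle to a single $a_i$–$Z_i$ star; the careful bookkeeping needed to show $Y$ is universal and $Z$ is a clique is the other place to be cautious. Once all this is in place, ``at most one non-trivial star'' is the short out-degree argument above.
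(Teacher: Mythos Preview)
Your proof is correct, and it is organized rather differently from the paper's, in a way worth noting. The paper argues by casing on the out-degrees of the $a_i$: either $d^+(a_i)=1$ for all $i$ (giving case~(a) directly via Lemma~\ref{lem:cycle-adjacent}), or some $a_j$ has $d^+(a_j)\ge2$, in which case it shows that this $a_j$ is unique, that $N^+(a_j)\setminus\{a_{j+1}\}\subseteq X_{j+2}$, and that each such extra out-neighbour can fail to be adjacent only to $a_{j+1}$. You instead look from the outside in: you partition $V(D)\setminus S$ according to how many out-neighbours a vertex has in $S$, prove the pleasant structural fact $Z_i\subseteq X_{i+1}$, and then pin down every non-edge of $\overline G$ directly, with the ``at most one star'' coming from the short out-degree contradiction on two vertices $v\stackrel{*}{\to}a_i$, $v'\stackrel{*}{\to}a_j$ in distinct partite sets. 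The two arguments are in some sense dual (the paper tracks arcs leaving $S$, you track arcs entering $S$); yours is more symmetric, never singles out a special $a_j$, and avoids the case split altogether, while the paper's route makes the identity of the star's centre (namely $a_{j+1}$ for the unique high-out-degree $a_j$) more explicit. Either way the same Lemma~\ref{lem:cycle-adjacent} and Proposition~\ref{Prop:different-partite-adjacent} do the heavy lifting.
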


\begin{proof}
If $D$ has a sink $u$, then $u$ is a common out-neighbor of the vertices in $V(D)-X$ where $X$ is the partite set containing $u$, and so there is no anti-$\{1,2\}$-competing set intersecting with three partite sets.
Thus $D$ has no sink.

Let $S=\{u_1,u_2,u_3\}$ and $X_1,X_2$, and $X_3$ be the partite sets of $D$ with $u_i \in X_i$ for each $1\leq i \leq 3$.
Since $S$ is an anti-$\{1,2\}$-competing set in $D$,
no two vertices in $S$ compete and so
the vertices in $S$ form a directed cycle $C$ of order $3$ in $D$. Without loss of generality, we may assume
\[C:=u_1 \to u_2 \to u_3 \to u_1 \]
{\it Case 1}. $d^+(u_i)=1$ for each $1\leq i  \leq 3$.
Then each vertex in $V(D) - S$ has at least two out-neighbors in $S$.
Therefore $\left( V(D)-S \right) \sim V(D)$ by Lemma~\ref{lem:cycle-adjacent}.
Thus $C_{1,2}(D)$ contains a subgraph isomorphic to $K_n - E(K_3)$.
Then, since $S$ is a stable set in $C_{1,2}(D)$, $C_{1,2}(D) \cong K_n - E(K_3)$.

{\it Case 2}. $d^+(u_j)\neq1$ for some $j \in \{1,2,3\}$.
Then $d^+(u_j) \geq 2$.
Without loss of generality, we may assume $j=1$.
Then, since $u_1\to u_2$ and $u_2 \to u_3$,
\begin{equation}\label{eq:Thm:maximum-I=3}
\emptyset \neq N^+(u_1) - \{u_2\} \subseteq X_3
\end{equation}
by Proposition~\ref{Prop:different-partite-adjacent} and so
\[n \geq 4.\]

We first show that $u_1$ is the only vertex on $C$ of outdegree at least $2$.
Suppose that $u_q$ has outdegree at least $2$ for some $q \in \{2,3\}$.
We first consider the case $q=3$. Then $u_3$ has outdegree at least $2$.
Let $w$ be an out-neighbor of $u_3$ distinct from $u_1$.
Since $u_3 \in X_3$, $w \notin X_3$.
$u_1$ has an out-neighbor $x$ in $X_3$ by \eqref{eq:Thm:maximum-I=3} and there is an arc between $w$ and $x$.
If $w\to x$ (resp.\ $x \to w$), then $x$ (resp.\ $w$) is a $(1,2)$-step common out-neighbor of $u_1$ and $u_3$.
Thus $u_1 \sim u_3$, which is impossible.
Therefore $q=2$.
Then $d^+(u_2) \geq 2$.
Since $u_1 \not \sim u_2$, $ N^+(u_2)=N^+(u_2)- \{u_1\} \subseteq X_3$ by Proposition~\ref{Prop:different-partite-adjacent}.
Hence
there exists a vertex $x \in X_3- \{u_3\}$ belonging to $N^+(u_2)$.
Then $u_1 \to x$ or $x\to u_1$. If $u_1 \to x$, then $x$ is a common out-neighbor of $u_1$ and $u_2$, a contradiction.
Therefore  $x \to u_1$ and so $u_1$ is a $(1,2)$-step common out-neighbor of $u_2$ and $u_3$, a contradiction.
Thus $u_1$ is the only vertex on $C$ of outdegree at least $2$.
Hence
\begin{equation} \label{eq:character-different-partite-11}
 u_2\stackrel{*}{\to} u_3 \quad \text{and} \quad u_3\stackrel{*}{\to} u_1. \end{equation}
We denote $N^+(u_1)-\{u_2\}$ by $N$.
Then $N\neq \emptyset$.
By~\eqref{eq:Thm:maximum-I=3} and~\eqref{eq:character-different-partite-11},
each vertex in $(V(D) - N)- S$ has at least two out-neighbors in $S$.
Since $S=V(C)$ and the length of $C$ is $3$, 
\begin{equation} \label{eq:character-different-partite-13}
\left ((V(D) - N)- S \right) \sim \left( V(D)-N\right)
\end{equation} by Lemma~\ref{lem:cycle-adjacent}.

Take $v \in N$. Then $v \in X_3$ by~\eqref{eq:Thm:maximum-I=3}.
By \eqref{eq:character-different-partite-11}, \[v \to u_2. \] Thus $u_2$ is a common out-neighbor of $u_1$ and $v$ and a $(1,2)$-step common out-neighbor of $v$ and $u_3$.
Therefore $v \sim u_1$ and $v \sim u_3$.
Moreover, each vertex in $V(D) - S$ has an out-neighbor as $u_2$ or $u_3$ by~\eqref{eq:character-different-partite-11}.
Then, since $v\to u_2$, $v \sim \left(V(D)-S\right)$.
Thus $u_2$ is the only vertex that can be nonadjacent to $v$ in $C_{1,2}(D)$.
Since $v$ was chosen from $N$,
\begin{equation} \label{eq:character-different-partite-14}
N_{C_{1,2}(D)}(v)\supseteq V(D)-\{u_2\}
\end{equation}
for any $v \in N$.
Let $M$ be the maximum subset of $N$ such that
\[	 N_{C_{1,2}(D)}(v)= V(D)-\{u_2\}\] 
for any $v \in M$.
 Then, by \eqref{eq:character-different-partite-13} and \eqref{eq:character-different-partite-14}, we may conclude 
 $C_{1,2}(D) \cong K_n -( E(K_3) \cup E(K_{1,l}))$ where $K_3$ and $K_{1,l}$ represents the complete graph with the vertex set $S$ and the complete bipartite graph with the bipartition $(\{u_2\}, M\cup \{u_1,u_3\})$.
Therefore (a) holds if $M=\emptyset$ and (b) holds otherwise.
\end{proof}

\section{Acknowledgement}

This work was supported by Science Research Center Program through the National Research Foundation of Korea (NRF) grant funded by the Korean Government (MSIT) (NRF-2022R1A2C1009648 and 2016R1A5A1008055).

\bibliographystyle{plain}

\end{document}